\theoremstyle{plain}
 \newtheorem{thm}{\textbf{Theorem}}[section]
 \newtheorem{prop}{\textbf{Proposition}}[section]
 \newtheorem{lem}{\textbf{Lemma}}[section]
 \newtheorem{cor}{\textbf{Corollary}}[section]
\theoremstyle{definition}
 \newtheorem{exm}{\textbf{Example}}[section]
\theoremstyle{remark}
 \newtheorem{rem}{\textbf{Remark}}[section]
 \numberwithin{equation}{section}
\renewcommand{\leq}{\leqslant}
\renewcommand{\geq}{\geqslant}
\title[Composition Operators on the Bidisk]{Bounded Composition Operators and Multipliers of Some Reproducing Kernel Hilbert Spaces on the Bidisk}
\subjclass[2010]{Primary 47B33; Secondary 47B32}
\author[Chu]{\bfseries Cheng Chu}
\address{
Department of Mathematics \\ 
Vanderbilt University  \\ 
Nashville, Tennessee \\
USA}
\email{cheng.chu@vanderbilt.edu}
\begin{document}

\vspace{18mm}
\setcounter{page}{1}
\thispagestyle{empty}

\begin{abstract}
We study the boundedness of composition operators on the bidisk using reproducing kernels. We show that a composition operator is bounded on the Hardy space $H^2(\DD^2)$ if some associated function is a positive kernel. This positivity condition naturally leads to the study of the sub-Hardy Hilbert spaces of the bidisk, which are analogs of de Branges-Rovnyak spaces on the unit disk. We discuss multipliers of those spaces and obtain some classes of bounded composition operators on the bidisk.
\end{abstract}

\maketitle

\section{Introduction}  
Let $\DD$ denote the open unit disk in $\CC$ with boundary $\TT$. The bidisk $\DD^2$ and the torus $\TT^2$ are the subsets of $\CC^2$ which are Cartesian products of two copies $\DD$ and $\TT$, respectively. The Hardy space $H^2(\DD)$ is the closure of the analytic polynomials in $L^2(\TT)$ and the Hardy space $H^2(\DD^2)$ (or $H^2$) is the closure of the analytic polynomials in $L^2(\TT^2, d\Gs)$ (or $L^2(\TT^2)$), where $d\Gs$ is the normalized Haar measure on $\TT^2$. $H^\infty(\DD^2)$ is the space of bounded analytic functions on $\DD^2$ with norm
$$||f||_\infty=\sup_{(z_1,z_2)\in\DD^2}|f(z_1,z_2)|.$$

For a bounded domain $\GO\subset\CC^d$ ($\GO=\DD$ or $\DD^2$), the composition operator $C_\varphi$ on $H^{2}(\GO)$ is defined by $C_{\varphi}f=f\circ\varphi$, for an analytic self-map $\varphi$ of $\GO$. One of the fundamental problems is to classify the mappings $\varphi$ which induce bounded operators $C_\varphi$, and this problem has been studied on various analytic function spaces. Littlewood's famous Subordination Principle implies that each composition operator is bounded on $H^2(\DD)$ (e.g. \cite{sha93}*{p. 31}), and it is known that the same result does not hold in higher dimensions. In the Hardy space or weighted Bergman spaces of the unit ball in $\CC^n$, the boundedness of $C_\varphi$ is characterized using Carleson measure (e.g. \cite{cm95}*{Section 3.5}).

Another natural extension of complex analysis to the multivariable case is the bidisk, or polydisk in the more general higher dimensional setting. It is known that composition operators $C_B$ on $H^2(\DD^2)$ are not always bounded \cite{sish}; one such example is to take $B(z_1,z_2)=(z_1,z_1)$. On the other hand, one can easily show that $C_B$ is bounded on $H^2(\DD^2)$ in some special cases (Example \ref{ex0}). In this paper, we find new nontrivial classes of bounded composition operators on $H^2(\DD^2)$.

In \cite{ju07}, Jury reproved the boundedness of composition operators on $H^2(\DD)$ using only reproducing kernels. We adapt this Hilbert space method to study the two-variable case and find a sufficient condition for the boundedness of a composition operator on $H^2(\DD^2)$ (Theorem \ref{M}). To obtain functions that satisfy that condition, it is equivalent to find multipliers of the so-called sub-Hardy Hilbert spaces on the bidisk (see the definition in Section 2), which are analogues of de Branges-Rovnyak spaces on the disk. We discuss the multipliers of sub-Hardy Hilbert spaces in Section 4 and get some new classes of bounded composition operators (Theorem \ref{com}). Several concrete examples are presented in Section 5.

\section{Preliminaries}\label{Pre}
In this section, we first present some basic theory of reproducing kernel Hilbert spaces. For more information about reproducing kernels and their associated Hilbert spaces, see \cite{aro50} and \cite{pau16}.

Let $X\subset \CC^d$. We say a function $K: X\times X\to\CC$ is a positive kernel on $X$ if it is self-adjoint ($K(x,y)=\ol{K(y,x)}$), and for all finite sets $\{\Gl_1,\Gl_2,\dots, \Gl_{m}\}\subset X$, the matrix $(K(\Gl_i,\Gl_j))_{i,j=1}^m$ is positive semi-definite.

Here are the usual ways to construct new positive kernels from old ones (see for example \cite{aro50}).
\begin{prop}\label{prop}
Let $K_1, K_2$ be positive kernels on $X$. Then
\begin{enumerate}
\item $K_1+K_2$ is a positive kernel.
\item $K_1\cdot K_2$ is a positive kernel.
\item If $f: X\to\CC$ is a function, then $\overline{f(w)}f(z)K_1(z,w)$ is a positive kernel.
\end{enumerate}
\end{prop}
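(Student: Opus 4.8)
The three assertions all reduce to facts about finite positive semi-definite matrices, so the plan is to fix a finite set $\{\lambda_1,\dots,\lambda_m\}\subset X$ and work with the Gram matrices $A=(K_1(\lambda_i,\lambda_j))_{i,j=1}^m$ and $B=(K_2(\lambda_i,\lambda_j))_{i,j=1}^m$, which are positive semi-definite by hypothesis. In each of the three cases self-adjointness of the new kernel is immediate from $\overline{K_\ell(y,x)}=K_\ell(x,y)$, so only positive semi-definiteness of the corresponding matrix needs checking. I expect (1) and (3) to be pure bookkeeping, with the Schur product theorem behind (2) being the only step with real content.

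For (1), the matrix of $K_1+K_2$ on the chosen points is $A+B$, which is positive semi-definite because the positive semi-definite $m\times m$ matrices form a convex cone. For (3), writing $D=\operatorname{diag}(f(\lambda_1),\dots,f(\lambda_m))$, the matrix of $\overline{f(w)}f(z)K_1(z,w)$ on the chosen points is $DAD^{*}$, and $c^{*}(DAD^{*})c=(D^{*}c)^{*}A(D^{*}c)\ge 0$ for all $c\in\CC^m$; so it too is positive semi-definite.

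For (2), the matrix of $K_1K_2$ on the chosen points is the Hadamard (entrywise) product $A\circ B$, and the task is the Schur product theorem. I would argue as follows: factor $A=CC^{*}$ with $C=(c_{ik})$, so that $A_{ij}=\sum_k c_{ik}\overline{c_{jk}}$ and hence $(A\circ B)_{ij}=\sum_k c_{ik}\overline{c_{jk}}B_{ij}$; for each fixed $k$ the matrix $(c_{ik}\overline{c_{jk}}B_{ij})_{i,j}$ equals $D_kBD_k^{*}$ with $D_k=\operatorname{diag}(c_{1k},\dots,c_{mk})$ and so is positive semi-definite by the same congruence argument used for (3); summing over $k$ and applying (1) shows $A\circ B$ is positive semi-definite. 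The conceptual picture behind all three parts is that if $K_\ell(x,y)=\langle h_x^{(\ell)},h_y^{(\ell)}\rangle_{\mathcal H_\ell}$ are feature-map representations, then $K_1+K_2$, $K_1K_2$ and $\overline{f(w)}f(z)K_1(z,w)$ are the Gram kernels of the maps $x\mapsto h_x^{(1)}\oplus h_x^{(2)}$ into $\mathcal H_1\oplus\mathcal H_2$, $x\mapsto h_x^{(1)}\otimes h_x^{(2)}$ into $\mathcal H_1\otimes\mathcal H_2$, and $x\mapsto f(x)h_x^{(1)}$ into $\mathcal H_1$, respectively; this yields a uniform proof once the correspondence between positive kernels and such feature maps is available, which is why the statement needs no deeper idea.
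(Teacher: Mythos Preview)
Your argument is correct in all three parts; the reduction to finite Gram matrices, the congruence $DAD^{*}$ for part (3), and the standard Schur-product proof via $A=CC^{*}$ for part (2) are all valid. The paper itself does not give a proof of this proposition at all: it simply states the result as standard and cites Aronszajn \cite{aro50}, so there is nothing to compare beyond noting that your write-up supplies the details the paper omits.
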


A reproducing kernel Hilbert space $\mathcal{H}$ on $X$ is a Hilbert space of complex valued functions on $X$ such that every point evaluation is a continuous linear functional. Thus there exists an element $K_w\in\mathcal{H}$ such that for each $f\in\mathcal{H}$, $$\langle f, K_w\rangle_{\mathcal{H}} =f(w).$$
Since $K_w(z)=\langle K_w, K_z\rangle_{\mathcal{H}}$, $K$ can be regarded as a function on $X\times X$ and we write $K(z,w)= K_w(z)$. Such $K$ is a positive kernel and the Hilbert space $\mathcal{H}$ with reproducing kernel $K$ is denoted by $\mathcal{H}(K)$.

The following theorem, due to Moore, shows that there is a one-to-one correspondence between reproducing kernel Hilbert spaces and positive kernels (see for example \cite{ampi}*{Theorem 2.23}).
\begin{thm}\label{Moore}
Let $X\subset \CC^d$ and let $K: X\times X\to\CC$ be a positive kernel. Then there exists a unique reproducing kernel Hilbert space $\mathcal{H}(K)$ whose reproducing kernel is $K$.
\end{thm}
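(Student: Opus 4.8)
The plan is to construct $\mathcal{H}(K)$ explicitly out of the functions $K_w := K(\cdot,w)$, $w\in X$, verify the reproducing property, and then argue uniqueness by a density/extension argument.

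First I would let $\mathcal{H}_0$ be the linear span of $\{K_w : w\in X\}$ inside the vector space $\CC^X$ of all complex-valued functions on $X$, and define a sesquilinear form on $\mathcal{H}_0$ by setting $\langle K_w, K_z\rangle := K(z,w)$ and extending. Two checks are needed here. First, this form is well defined: if $g=\sum_j d_j K_{z_j}$ then for any $f\in\mathcal{H}_0$ one gets $\langle f,g\rangle = \sum_j \ol{d_j}\,f(z_j)$, which depends only on $f$ as a function, and symmetrically in the other slot, so the value does not depend on the chosen finite representations; self-adjointness of $K$ gives $\langle f,g\rangle = \ol{\langle g,f\rangle}$. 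Second, for $f=\sum_i c_i K_{w_i}$ one computes $\langle f,f\rangle = \sum_{i,j}\ol{c_i}\,c_j\,K(w_i,w_j)$, which is exactly the quadratic form of the positive semi-definite matrix $\big(K(w_i,w_j)\big)$ from the hypothesis, hence $\langle f,f\rangle\ge 0$. To upgrade this to positive \emph{definiteness}, note that by construction $\langle f, K_w\rangle = f(w)$ for every $f\in\mathcal{H}_0$, so the Cauchy--Schwarz inequality (valid already for a semi-definite form) yields $|f(w)|^2 = |\langle f,K_w\rangle|^2 \le \langle f,f\rangle\,K(w,w)$; thus $\langle f,f\rangle = 0$ forces $f\equiv 0$, and $\mathcal{H}_0$ is a genuine inner product space.

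Next I would complete $\mathcal{H}_0$ and realize the completion concretely as a space of functions on $X$. If $(f_n)$ is Cauchy in $\mathcal{H}_0$, then for each fixed $w$ we have $|f_n(w)-f_m(w)| = |\langle f_n-f_m, K_w\rangle|\le \|f_n-f_m\|\,K(w,w)^{1/2}$, so $(f_n(w))$ is Cauchy in $\CC$; define the map sending the class of $(f_n)$ to the pointwise-limit function $w\mapsto\lim_n f_n(w)$. This is well defined (equivalent Cauchy sequences have the same pointwise limit, by the same estimate) and linear. It is injective: if a Cauchy sequence $(f_n)$ tends to $0$ pointwise, then its limit $f$ in the abstract completion satisfies $\langle f, K_w\rangle = \lim_n f_n(w) = 0$ for all $w$, and since $\mathcal{H}_0$ is dense in the completion this forces $f=0$. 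Transporting the Hilbert-space structure through this injection, we obtain a Hilbert space $\mathcal{H}(K)\subseteq\CC^X$ containing every $K_w$; and for $f\in\mathcal{H}(K)$ with $f_n\to f$ in norm, $f(w)=\lim_n f_n(w)=\lim_n\langle f_n,K_w\rangle = \langle f,K_w\rangle$, so point evaluations are continuous and $K$ is the reproducing kernel of $\mathcal{H}(K)$.

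Finally, for uniqueness, suppose $\mathcal{H}'$ is any reproducing kernel Hilbert space on $X$ whose reproducing kernel is $K$. Its reproducing property forces $K_w\in\mathcal{H}'$ and $\langle K_w,K_z\rangle_{\mathcal{H}'} = K_w(z) = K(z,w)$, so $\mathcal{H}'$ contains $\mathcal{H}_0$ with precisely the same inner product; moreover $\mathcal{H}_0$ is dense in $\mathcal{H}'$, since any $f\in\mathcal{H}'$ orthogonal to every $K_w$ satisfies $f(w)=\langle f,K_w\rangle_{\mathcal{H}'}=0$. Hence the identity map on $\mathcal{H}_0$ extends to an isometric isomorphism of $\mathcal{H}(K)$ onto $\mathcal{H}'$, and because on both sides the elements are honest functions on $X$ recovered as pointwise limits, this isomorphism is the pointwise identity; therefore $\mathcal{H}'=\mathcal{H}(K)$ with identical norms. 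The one genuinely non-formal step — everything else being bookkeeping with the positivity hypothesis and Cauchy--Schwarz — is the passage from the abstract metric completion to a concrete function space: one must check that the pointwise limit recovers the abstract limit and that distinct abstract limits give distinct functions, which is precisely the injectivity argument above.
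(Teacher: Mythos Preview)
The paper does not supply its own proof of this theorem; it merely attributes the result to Moore and cites \cite{ampi}*{Theorem 2.23} in the preamble to the statement, so there is nothing to compare against directly.

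Your argument is correct and is exactly the classical Moore--Aronszajn construction one finds in the cited references: build the pre-Hilbert space as the span of the kernel functions $K_w$, use the positivity hypothesis for semidefiniteness and the reproducing identity $\langle f,K_w\rangle=f(w)$ together with Cauchy--Schwarz for definiteness, complete, realize the completion as a function space via pointwise limits (with injectivity coming from density of the $K_w$), and deduce uniqueness from the fact that $\{K_w\}$ must be dense in any candidate $\mathcal{H}'$. All the delicate points---well-definedness of the form independent of representation, validity of Cauchy--Schwarz for a merely semidefinite form, and the check that the abstract completion embeds injectively into $\CC^X$---are handled correctly.
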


We also need the next theorem that characterizes the functions that belong to a reproducing kernel Hilbert space in terms of the reproducing kernel.
\begin{thm}\cite{pau16}*{Theorem 3.11}\label{or}
Let $\mathcal{H}(K)$ be a reproducing kernel Hilbert space on $X$ and let $f: X\to\CC$ be a function. Then $f\in \mathcal{H}(K)$ with $||f||_{\mathcal{H}(K)}\leq c$ if and only if
$$
c^2K(z,w)-\overline{f(w)}f(z)
$$
is a positive kernel.
\end{thm}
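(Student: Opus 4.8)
The plan is to prove both directions by exploiting the one-to-one correspondence between positive kernels and reproducing kernel Hilbert spaces provided by Moore's Theorem (Theorem \ref{Moore}), together with the concrete description of $\mathcal{H}(K)$ as the completion of $\operatorname{span}\{K_w : w \in X\}$. First I would recall that for $\mathcal H(K)$, finite linear combinations $g = \sum_{i} a_i K_{w_i}$ are dense, and for such $g$ one has the two basic identities $\|g\|_{\mathcal H(K)}^2 = \sum_{i,j} \bar a_j a_i K(w_i,w_j)$ and $g(w) = \langle g, K_w\rangle = \sum_i a_i K(w_i,w)$. These will be the computational backbone of both implications.

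For the forward direction, suppose $f \in \mathcal H(K)$ with $\|f\|_{\mathcal H(K)} \le c$. I would fix a finite set $\{w_1,\dots,w_m\}\subset X$ and scalars $a_1,\dots,a_m$, and compute the quadratic form of $c^2 K(z,w) - \overline{f(w)}f(z)$ directly:
\[
\sum_{i,j} \bar a_j a_i \bigl(c^2 K(w_i,w_j) - \overline{f(w_j)}f(w_i)\bigr)
= c^2 \Bigl\| \sum_i a_i K_{w_i} \Bigr\|^2 - \Bigl| \sum_i a_i f(w_i) \Bigr|^2 .
\]
Writing $g = \sum_i a_i K_{w_i}$, the second term is $|\langle g, f\rangle|^2$ by the reproducing property, so the whole expression equals $c^2\|g\|^2 - |\langle g,f\rangle|^2$, which is nonnegative by Cauchy--Schwarz and the bound $\|f\|\le c$. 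Self-adjointness of the new kernel is immediate. Hence $c^2 K - \overline{f(w)}f(z)$ is a positive kernel.

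For the converse, assume $L(z,w) := c^2 K(z,w) - \overline{f(w)}f(z)$ is a positive kernel. By Moore's Theorem there is a reproducing kernel Hilbert space $\mathcal H(L)$. The key step is to show that the identity map extends to a bounded inclusion, or more precisely that $f$ lies in $\mathcal H(K)$ with the right norm bound; the cleanest route is a pull-back/factorization argument. Since $c^2 K = L + \overline{f(w)}f(z)$ and both summands on the right are positive kernels (the second by Proposition \ref{prop}(3) applied to the constant kernel $1$), one can use the standard fact that if $K = K_1 + K_2$ with all three positive kernels, then $\mathcal H(K_2)$ is contractively contained in $\mathcal H(K)$. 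Applying this with $K_2(z,w) = \overline{f(w)}f(z)$: the space $\mathcal H(\overline{f(w)}f(z))$ is exactly the one-dimensional space spanned by $f$ with $\|f\|=1$ (when $f\not\equiv 0$), since its kernel has rank one. Contractive containment of $\mathcal H(\overline{f(w)}f(z))$ in $\mathcal H(c^2 K)$ then gives $f \in \mathcal H(c^2 K) = \mathcal H(K)$ with $\|f\|_{\mathcal H(K)} \le c$; the trivial case $f\equiv 0$ is handled separately. I expect the main obstacle to be justifying the "sum of kernels implies contractive containment of the summand spaces" fact at the level of rigor the paper wants — one must either cite it or reprove it via Moore's Theorem and a short argument identifying $\mathcal H(K_2)$ as a subspace of $\mathcal H(K)$ on which the inclusion is a contraction; everything else is routine Cauchy--Schwarz bookkeeping.
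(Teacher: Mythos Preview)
The paper does not actually prove this theorem; it is quoted from \cite{pau16}*{Theorem 3.11} and stated without proof, so there is no ``paper's own proof'' to compare against. Your argument is correct: the forward direction is the standard Cauchy--Schwarz computation, and for the converse your use of the Aronszajn sum-of-kernels fact (that $\mathcal H(K_2)$ embeds contractively in $\mathcal H(K_1+K_2)$) together with the identification of $\mathcal H(\overline{f(w)}f(z))$ as the one-dimensional space spanned by $f$ with unit norm is exactly the classical route, and the scaling $\|g\|_{\mathcal H(c^2K)}=c^{-1}\|g\|_{\mathcal H(K)}$ gives the desired bound $\|f\|_{\mathcal H(K)}\le c$.
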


A function $\varphi: X\to\CC$ is called a multiplier of $\mathcal{H}(K)$ on $X$ if $\varphi f\in \mathcal{H}(K)$ whenever $f\in \mathcal{H}(K)$. If $\varphi$ is a multiplier of $\mathcal{H}(K)$, let $M_\varphi: f\longmapsto \varphi f$ be the multiplication operator on $\mathcal{H}(K)$. In this case, it is well-known that the kernel functions are eigenvectors for the adjoints of multiplication operators: $M_{\varphi}^{*}K_z=\overline{\varphi(z)}K_z$, and, as a consequence (see for example \cite{pau16}*{Chapter 5.7})
\beq\label{norm}||M_{\varphi}||\geq \sup_{z\in X}|\varphi(z)|.\eeq

The following theorem characterizes multipliers of reproducing kernel Hilbert spaces.
\begin{thm}\cite{ampi}*{Corollary 2.37}\label{m}
Let $\mathcal{H}(K)$ be a reproducing kernel Hilbert space on $X$, and let $\varphi: X\to\CC$ be a function. Then $\varphi$ is a multiplier of $\mathcal{H}(K)$ with multiplier norm at most $\Gd$ if and only if
$$
(\Gd^2-\varphi(z)\overline{\varphi(w)})\cdot K(z,w)
$$
is a positive kernel. If $\Gd\leq 1$, then $\varphi$ is called a contractive multiplier of $\mathcal{H}(K)$.
\end{thm}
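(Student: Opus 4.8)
The statement is an instance of the standard Pick-type description of multipliers, and I would prove both implications using only the material already set up: Theorem \ref{or}, Proposition \ref{prop}, and the adjoint identity $M_\varphi^* K_w=\overline{\varphi(w)}K_w$.

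For the forward direction, suppose $\varphi$ is a multiplier of $\mathcal{H}(K)$ with multiplier norm at most $\Gd$. (Note that $M_\varphi$ is automatically bounded: point evaluations being continuous, $f_n\to f$ and $\varphi f_n\to g$ force $g=\varphi f$ pointwise, so the closed graph theorem applies, and the multiplier norm is $\|M_\varphi\|$.) Since $\|M_\varphi^*\|=\|M_\varphi\|\le\Gd$, the operator $\Gd^2 I-M_\varphi M_\varphi^*$ is positive. Fix a finite set $\{\Gl_1,\dots,\Gl_m\}\subset X$ and scalars $c_1,\dots,c_m$, and apply this positive operator to $h=\sum_j c_j K_{\Gl_j}\in\mathcal{H}(K)$. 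Using $M_\varphi^* K_{\Gl_j}=\overline{\varphi(\Gl_j)}K_{\Gl_j}$ and the reproducing property $\langle K_{\Gl_i},K_{\Gl_j}\rangle=K(\Gl_j,\Gl_i)$, the inequality $\langle (\Gd^2 I-M_\varphi M_\varphi^*)h,h\rangle=\Gd^2\|h\|^2-\|M_\varphi^* h\|^2\ge 0$ becomes, after relabeling indices, $\sum_{i,j} d_i\overline{d_j}\,(\Gd^2-\varphi(\Gl_i)\overline{\varphi(\Gl_j)})K(\Gl_i,\Gl_j)\ge 0$; since self-adjointness of $(\Gd^2-\varphi(z)\overline{\varphi(w)})K(z,w)$ is immediate, it is a positive kernel.

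For the converse, suppose $L(z,w):=(\Gd^2-\varphi(z)\overline{\varphi(w)})K(z,w)$ is a positive kernel. It is enough to show $\|\varphi f\|_{\mathcal{H}(K)}\le\Gd$ whenever $\|f\|_{\mathcal{H}(K)}\le 1$, since the general multiplier estimate $\|\varphi f\|_{\mathcal{H}(K)}\le\Gd\|f\|_{\mathcal{H}(K)}$ (and in particular $\varphi f\in\mathcal{H}(K)$ for every $f$) then follows by homogeneity. So fix $f$ with $\|f\|_{\mathcal{H}(K)}\le 1$. By Theorem \ref{or}, $K(z,w)-\overline{f(w)}f(z)$ is a positive kernel, hence by Proposition \ref{prop}(3) so is $\varphi(z)\overline{\varphi(w)}\,\bigl(K(z,w)-\overline{f(w)}f(z)\bigr)$. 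Adding this to $L(z,w)$ and using Proposition \ref{prop}(1), the terms $\varphi(z)\overline{\varphi(w)}K(z,w)$ cancel and we conclude that $\Gd^2 K(z,w)-\overline{\varphi(w)f(w)}\,\varphi(z)f(z)$ is a positive kernel. Theorem \ref{or} then gives $\varphi f\in\mathcal{H}(K)$ with $\|\varphi f\|_{\mathcal{H}(K)}\le\Gd$, completing the proof.

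The converse is a one-line manipulation of positive kernels, so the only real care needed is in the forward direction: one must fix the convention $K(z,w)=K_w(z)=\langle K_w,K_z\rangle$ and keep track of which variable carries the conjugate when translating the operator inequality $M_\varphi M_\varphi^*\le\Gd^2 I$ into positive semi-definiteness of the matrix $(L(\Gl_i,\Gl_j))$. I expect this index-and-conjugation bookkeeping to be the main (and only mild) obstacle.
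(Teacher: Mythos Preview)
Your argument is correct in both directions. The forward direction correctly translates the operator inequality $M_\varphi M_\varphi^*\le\Gd^2 I$ (tested on finite linear combinations of kernel functions, using $M_\varphi^*K_w=\overline{\varphi(w)}K_w$) into positive semi-definiteness of the required matrix, and your relabeling/conjugation bookkeeping checks out against the paper's convention $K(z,w)=\langle K_w,K_z\rangle$. The converse is a clean reduction to Theorem~\ref{or} via Proposition~\ref{prop}, and the cancellation you describe is exactly right.

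As for comparison: the paper does not give its own proof of this statement. It is quoted as Corollary~2.37 from Agler--McCarthy and used as a black box. So there is no ``paper's proof'' to weigh your approach against; your write-up is a correct, self-contained derivation from the other preliminaries (Theorem~\ref{or} and Proposition~\ref{prop}) already assembled in Section~\ref{Pre}, which is precisely the standard route.
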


As a corollary of Theorem \ref{m}, we have
\begin{cor}\label{in}
Let $\mathcal{H}(K_1)$ and $\mathcal{H}(K_2)$ be reproducing kernel Hilbert spaces on $X$. Then $\mathcal{H}(K_1) \subset \mathcal{H}(K_2)$ if and only if there is some constant $\Gd>0$ such that
$$
K_2(z,w)-\frac{1}{\Gd^2}K_1(z,w)
$$
is a positive kernel.
\end{cor}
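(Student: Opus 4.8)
The plan is to prove the two implications separately, following the pattern of Theorem~\ref{m}: the ``if'' direction is a formal manipulation of positive kernels via Theorem~\ref{or}, while the ``only if'' direction needs the closed graph theorem together with a short computation with kernel functions.

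For the ``if'' part I would assume $K_2(z,w)-\frac1{\Gd^2}K_1(z,w)$ is a positive kernel, take $f\in\mathcal{H}(K_1)$, set $c=\|f\|_{\mathcal{H}(K_1)}$, and use the algebraic identity
$$
(\Gd c)^2K_2(z,w)-\overline{f(w)}f(z)=(\Gd c)^2\Bigl(K_2(z,w)-\tfrac1{\Gd^2}K_1(z,w)\Bigr)+\bigl(c^2K_1(z,w)-\overline{f(w)}f(z)\bigr).
$$
By Theorem~\ref{or} the second summand on the right is a positive kernel, the first is one by Proposition~\ref{prop} (scaling the positive kernel $K_2-\frac1{\Gd^2}K_1$ by $(\Gd c)^2\ge0$), hence their sum is a positive kernel by Proposition~\ref{prop}(1); Theorem~\ref{or} applied again yields $f\in\mathcal{H}(K_2)$ with $\|f\|_{\mathcal{H}(K_2)}\le\Gd\|f\|_{\mathcal{H}(K_1)}$. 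Thus $\mathcal{H}(K_1)\subset\mathcal{H}(K_2)$.

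For the ``only if'' part I would first argue that the inclusion $\iota\colon\mathcal{H}(K_1)\to\mathcal{H}(K_2)$ is bounded: its graph is closed, since norm convergence in a reproducing kernel Hilbert space implies pointwise convergence, so a common sequential limit in $\mathcal{H}(K_1)$ and in $\mathcal{H}(K_2)$ must be the same function on $X$; the closed graph theorem then gives boundedness, and I set $\Gd=\|\iota\|$. Next, fixing $\Gl_1,\dots,\Gl_m\in X$ and scalars $c_1,\dots,c_m$, and writing $K^{\ell}_{\Gl}$ for the kernel function of $\mathcal{H}(K_\ell)$ at $\Gl$ ($\ell=1,2$), I would put $g=\sum_j\overline{c_j}K^{1}_{\Gl_j}\in\mathcal{H}(K_1)\subset\mathcal{H}(K_2)$ and $h=\sum_j\overline{c_j}K^{2}_{\Gl_j}\in\mathcal{H}(K_2)$. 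The reproducing property gives $\|g\|_{\mathcal{H}(K_1)}^2=\sum_{i,j}c_i\overline{c_j}K_1(\Gl_i,\Gl_j)$ and $\|h\|_{\mathcal{H}(K_2)}^2=\sum_{i,j}c_i\overline{c_j}K_2(\Gl_i,\Gl_j)$, while, since $g\in\mathcal{H}(K_2)$,
$$
\|g\|_{\mathcal{H}(K_1)}^2=\sum_ic_ig(\Gl_i)=\langle g,h\rangle_{\mathcal{H}(K_2)}\le\|g\|_{\mathcal{H}(K_2)}\|h\|_{\mathcal{H}(K_2)}\le\Gd\,\|g\|_{\mathcal{H}(K_1)}\|h\|_{\mathcal{H}(K_2)}.
$$
Cancelling $\|g\|_{\mathcal{H}(K_1)}$ (the case $g=0$ being trivial) and squaring yields $\sum_{i,j}c_i\overline{c_j}K_1(\Gl_i,\Gl_j)\le\Gd^2\sum_{i,j}c_i\overline{c_j}K_2(\Gl_i,\Gl_j)$, which says exactly that $\sum_{i,j}c_i\overline{c_j}\bigl(K_2(\Gl_i,\Gl_j)-\frac1{\Gd^2}K_1(\Gl_i,\Gl_j)\bigr)\ge0$; together with the evident self-adjointness this makes $K_2-\frac1{\Gd^2}K_1$ a positive kernel.

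I expect the only genuine subtlety to be in the ``only if'' direction: invoking the closed graph theorem to upgrade the set-theoretic inclusion to a bounded embedding (this is precisely where the constant $\Gd$ is produced), and then carefully matching the quadratic forms $\sum c_i\overline{c_j}K_\ell(\Gl_i,\Gl_j)$ with the Hilbert space norms $\|g\|^2$ and $\|h\|^2$ of the kernel combinations. Everything in the ``if'' direction is routine once the decomposition above is written down.
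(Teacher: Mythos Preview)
Your argument is correct in both directions. The paper does not actually give a proof of this corollary: it is simply recorded after Theorem~\ref{m} with the phrase ``As a corollary of Theorem~\ref{m}, we have\ldots'' and no further details. Your write-up therefore supplies what the paper omits. Note, incidentally, that you do not invoke Theorem~\ref{m} at all; both directions rest on Theorem~\ref{or} together with Proposition~\ref{prop} and a closed-graph argument. The link to Theorem~\ref{m} that the paper alludes to is the more general version of that theorem (as in the cited reference \cite{ampi}) for multipliers between two different reproducing kernel Hilbert spaces, from which the inclusion statement follows by taking the constant multiplier $1$; your route via Theorem~\ref{or} is an equally standard and arguably more direct alternative. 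One trivial edge case: if $\mathcal{H}(K_1)=\{0\}$ then $\|\iota\|=0$ and your choice $\Gd=\|\iota\|$ fails the requirement $\Gd>0$, but then $K_1\equiv0$ and any $\Gd>0$ works.
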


We shall also use Toeplitz and Hankel operators. Let $P$ be the orthogonal projection from $L^2 (\TT^2)$ onto $ H^2(\DD^2)$. The Toepltiz operator with symbol $\varphi\in L^\infty(\TT^2)$ is defined by $$T_\varphi(h)=P(\varphi h),$$ for all $h\in H^2(\DD^2)$.
The Hankel operator with symbol $\varphi\in L^\infty(\TT^2)$ is defined by $$H_\varphi(h)=(I-P)(\varphi h),$$ for all $h\in H^2(\DD^2)$.

The following identity is well-known and easily established
\beq\label{han}
T_{fg}=T_fT_g-H_{\bar{f}}^*H_g,
\eeq
for $f,g \in L^\infty(\TT^2)$.

For a point $z\in\DD^2$, we use $z=(z_1, z_2)$ to denote the coordinates of $z$.
Define the coordinate maps $$P_1: \DD^2\to \{0\}\times\DD, (z_1,z_2)\longmapsto(0, z_2)$$ and
$$P_2: \DD^2\to\DD\times\{0\}, (z_1,z_2)\longmapsto(z_1, 0).$$
We have the following formula for the two backward shift operators
\beq\label{bs}
T_{\bz_j} f=\frac{f-f\circ P_j}{z_j},\q j=1,2,
\eeq
for any $f\in H^2(\DD^2)$.

\section{Bounded Composition Operator on $H^2(\DD^2)$}\label{3}
The following theorem is the main tool to find bounded composition operators.
\begin{thm}\label{M}
Let $B=(\phi, \psi)$ be an analytic map from $\DD^2$ to $\DD^2$, where $\phi$ and $\psi$ are in $H^\infty(\DD^2)$ and bounded by $1$.
Define a function $R$ on $\DD^2\times\DD^2$ as
\beq
R(z,w)=\frac{1-\overline{\phi(w)}{\phi(z)}}{1-\bar{w_1}z_1}\cdot \frac{1-\overline{\psi(w)}{\psi(z)}}{1-\bar{w_2}z_2}
\eeq

If $R$ is a positive kernel, then $C_B$ is a bounded composition operator on $H^2(\DD^2)$ and $$||C_B||\leq \left( \frac{1+|\phi(0)|}{1-|\phi(0)|} \right)^{1\over 2}\cdot \left( \frac{1+|\psi(0)|}{1-|\psi(0)|} \right)^{1\over 2}.$$
\end{thm}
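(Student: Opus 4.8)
The plan is to exploit the fact that $H^2(\DD^2)$ is the reproducing kernel Hilbert space with the Szegő kernel
$$
S(z,w)=\frac{1}{1-\bar w_1 z_1}\cdot\frac{1}{1-\bar w_2 z_2},
$$
and to reduce the boundedness of $C_B$ to a positivity statement about the composed kernel. The adjoint $C_B^*$, if bounded, must send kernel functions to kernel functions: $C_B^*S_w=S_{B(w)}$. So the natural strategy is to show directly that the map $\Gamma: S_w\mapsto S_{B(w)}$, densely defined on the span of kernel functions, extends to a bounded operator; its adjoint is then $C_B$ and the norm bound on $C_B$ equals the norm bound on $\Gamma$. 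By a standard argument (the "kernel test", essentially the content of Theorem \ref{or} applied to the operator), $\Gamma$ is bounded with $\|\Gamma\|\le c$ if and only if
$$
c^2 S(z,w)-S(B(z),B(w))
$$
is a positive kernel on $\DD^2$. So the whole theorem comes down to finding the right constant $c$ for which that difference is positive, given that $R$ is positive.

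The key algebraic step is a factorization of $c^2 S(z,w)-S(B(z),B(w))$ that isolates the hypothesis. Write $S(B(z),B(w))=\dfrac{1}{1-\overline{\phi(w)}\phi(z)}\cdot\dfrac{1}{1-\overline{\psi(w)}\psi(z)}$. Notice that
$$
S(B(z),B(w))=R(z,w)\cdot S(z,w),
$$
since $R(z,w)=\dfrac{1-\overline{\phi(w)}\phi(z)}{1-\bar w_1 z_1}\cdot\dfrac{1-\overline{\psi(w)}\psi(z)}{1-\bar w_2 z_2}$ is exactly the ratio $S(B(z),B(w))/S(z,w)$ rearranged. Hence
$$
c^2 S(z,w)-S(B(z),B(w))=\bigl(c^2-R(z,w)\bigr)\cdot S(z,w)+\text{(nothing else)};
$$
wait—more precisely $c^2 S - R\cdot S=(c^2-R)S$ only if we are careful, so the identity to use is
$$
c^2 S(z,w)-S(B(z),B(w))=c^2 S(z,w)-R(z,w)S(z,w)=\bigl(c^2-R(z,w)\bigr)S(z,w).
$$
Since $S$ is a positive kernel and products of positive kernels are positive (Proposition \ref{prop}(2)), it suffices to show $c^2-R(z,w)$ is a positive kernel, i.e. that the constant function $c$ is a multiplier of $\mathcal H(R)$ with multiplier norm $\le c$, which by Theorem \ref{m} is the same as $c^2-R$ being positive. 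By \eqref{norm}, a necessary condition is $c^2\ge\sup_z R(z,z)$; the plan is to show this is also sufficient here by using the one-variable de Branges–Rovnyak theory on each slice, or more directly by bounding $\sup_z R(z,z)$.

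The remaining, and main, obstacle is estimating $\sup_{z\in\DD^2} R(z,z)$ and showing $c^2-R$ is positive for $c^2$ equal to the claimed product. On the diagonal,
$$
R(z,z)=\frac{1-|\phi(z)|^2}{1-|z_1|^2}\cdot\frac{1-|\psi(z)|^2}{1-|z_2|^2}.
$$
The one-variable Schwarz–Pick lemma does not directly apply because $\phi$ depends on both variables, but freezing $z_2$ makes $z_1\mapsto\phi(z_1,z_2)$ a self-map of $\DD$, so Schwarz–Pick gives $\dfrac{1-|\phi(z)|^2}{1-|z_1|^2}\le\dfrac{1-|\phi(z_1,z_2)|^2}{|1-\overline{a}z_1|^2}\cdot\text{(stuff)}$—the clean route is instead: for a self-map $u$ of $\DD$, $\dfrac{1-|u(z_1)|^2}{1-|z_1|^2}\le\dfrac{1+|u(0)|}{1-|u(0)|}$ is \emph{false} in general, so one must be more careful. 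The correct bound comes from the fact that $\dfrac{1-|\phi(z)|^2}{1-|z_1|^2}$ is itself the diagonal of a positive kernel of the form $\dfrac{1-\overline{\phi(w)}\phi(z)}{1-\bar w_1 z_1}$ (this is a slice-wise de Branges–Rovnyak kernel), and for such kernels the reproducing kernel space is contractively contained in $H^2$ of the $z_1$-slice, giving the evaluation bound $\dfrac{1-|\phi(z)|^2}{1-|z_1|^2}\le\|k^\phi_z\|^2$ which, evaluated via the explicit de Branges–Rovnyak norm, is controlled by $\dfrac{1+|\phi(0,z_2)|}{1-|\phi(0,z_2)|}\le\dfrac{1+|\phi(0)|}{1-|\phi(0)|}$—here the last inequality needs $|\phi(0,z_2)|\le$ something, which is where a further Schwarz-lemma-type argument on the second slice enters, or one simply invokes $\|\phi\|_\infty\le 1$ and the subordination structure. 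Thus I expect the hard part to be organizing these slice-wise de Branges–Rovnyak estimates so that the two factors combine to give exactly $\left(\dfrac{1+|\phi(0)|}{1-|\phi(0)|}\right)\left(\dfrac{1+|\psi(0)|}{1-|\psi(0)|}\right)$, rather than a weaker product; once $\sup_z R(z,z)$ is bounded by this quantity, positivity of $c^2-R$ for this $c^2$ should follow from the contractive-containment structure of $\mathcal H(R)$ inside a weighted space, or directly from writing $c^2-R$ as a sum of positive kernels coming from the one-variable factorizations.
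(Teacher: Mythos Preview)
Your central algebraic identity is inverted. A direct computation gives
\[
R(z,w)\cdot S(B(z),B(w))=\frac{(1-\overline{\phi(w)}\phi(z))(1-\overline{\psi(w)}\psi(z))}{(1-\bar w_1 z_1)(1-\bar w_2 z_2)}\cdot\frac{1}{(1-\overline{\phi(w)}\phi(z))(1-\overline{\psi(w)}\psi(z))}=S(z,w),
\]
so $S=R\cdot S_B$, not $S_B=R\cdot S$ as you wrote; equivalently $R=S/S_B$, not $S_B/S$. With the correct factorization, $c^2 S-S_B=(c^2 R-1)\,S_B$, so the condition you would need is that $c^2 R(z,w)-1$ is a positive kernel, not that $c^2-R(z,w)$ is. The latter is essentially never positive: choosing scalars $\lambda_i$ with $\sum_i\lambda_i=0$ kills the $c^2$ term but leaves $-\sum_{i,j}\lambda_i\bar\lambda_j R$, which is negative whenever $\mathcal H(R)$ has dimension greater than one. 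So the whole program of bounding $\sup_z R(z,z)$ and invoking Schwarz--Pick is aimed at the wrong inequality.

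Even after correcting the factorization, the condition ``$c^2 R-1$ is positive'' is, by Theorem~\ref{or}, exactly the statement that the constant function $1$ lies in $\mathcal H(R)$ with $\|1\|_{\mathcal H(R)}\le c$, and there is no a priori reason $1\in\mathcal H(R)$ at all. The paper avoids this obstruction: rather than the constant $1$, it uses the normalized kernel $F=R_0/\|R_0\|_{\mathcal H(R)}$, which automatically has unit norm in $\mathcal H(R)$, so that $R(z,w)-\overline{F(w)}F(z)$ is positive by Theorem~\ref{or}. Multiplying by the positive kernel $S_B$ yields positivity of $S-\overline{F(w)}F(z)\,S_B$, which translates to $\|C_B^*M_F^*\|\le1$. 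The explicit formula $F(z)=\dfrac{(1-\overline{\phi(0)}\phi(z))(1-\overline{\psi(0)}\psi(z))}{\sqrt{(1-|\phi(0)|^2)(1-|\psi(0)|^2)}}$ shows $F$ is bounded and bounded below in $H^\infty(\DD^2)$, so $\|C_B\|\le\|M_{1/F}\|=\|1/F\|_\infty$, and computing $\inf|F|$ gives the stated constant. The missing idea in your proposal is precisely this replacement of the constant $1$ by the explicit unit-norm element $F\in\mathcal H(R)$, followed by inverting $M_F$.
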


\begin{proof}

The reproducing kernel of $H^2(\DD^2)$ is the the Szeg\H{o} kernel $$k_w(z)=\frac{1}{(1-\bar{w_1}z_1)(1-\bar{w_2}z_2)}.$$
We densely define an operator $C_B^*$ on $H^2(\DD^2)$ by $C_B^*k_z=k_{B(z)}$. If $f$ and $f\circ B$ are in $H^2(\DD^2)$, then
$$
\langle C_Bf, k_z\rangle_{H^2}=\langle f\circ B, k_z\rangle_{H^2}=f(B(z))=\langle f, k_{B(z)}\rangle_{H^2}=\langle f,C_B^* k_z\rangle_{H^2}.
$$
So $C_B^*$ is the formal adjoint of the composition operator $C_B$, thus it is sufficient to prove $C_B^*$ is bounded. Using
$$\langle C_B^{*}k_w, C_B^{*}k_z\rangle_{H^2} =\langle k_{B(w)}, k_{B(z)}\rangle_{H^2} = \frac{1}{(1-\overline{\phi(w)}\phi(z))(1-\overline{\psi(w)}\psi(z))},$$

we have \beq\label{*}\langle  k_w, k_z\rangle_{H^2}=\langle C_B^{*}k_w, C_B^{*}k_z\rangle_{H^2} \cdot R(z,w).\eeq

Since $R$ is a positive kernel, by Theorem \ref{Moore}, there exists a reproducing kernel Hilbert space $\mathcal{H}(R)$, such that $R(z,w)=\langle R_w,R_z\rangle_{\mathcal{H}(R)},$ where $R_w$ is the reproducing kernel of $\mathcal{H}(R)$ at $w$.

Let $$F(z)=\frac{R_0(z)}{||R_0||_{\mathcal{H}(R)}}.$$

Then $||F(z)||_{\mathcal{H}(R)}=1$ and by Theorem \ref{or}, $R(z,w)-\overline{F(w)}F(z)$ is a positive kernel. Multiply it by the positive kernel $\langle  C_B^{*}k_w, C_B^{*}k_z \rangle_{H^2}$ and using \eqref{*} and Proposition \ref{prop}(2), we get
\beq\label{1}
\langle  k_w, k_z\rangle_{H^2}-\overline{F(w)}F(z)\langle  C_B^{*}k_w, C_B^{*}k_z \rangle_{H^2}
\eeq
is a positive kernel.

Notice that
\beq\label{F}
F(z)=\frac{\langle R_0,R_z\rangle_{\mathcal{H}(R)}}{||R_{0}||}= \frac{R(z,0)}{\sqrt{R(0,0)}}=\frac{1-\overline{\phi(0)}\phi(z)}{\sqrt{1-|\phi(0)|^2}}\cdot\frac{1-\overline{\psi(0)}\psi(z)}{\sqrt{1-|\psi(0)|^2}}.
\eeq
We have $F\in H^{\infty}(\DD^2)$, and then $M_{F}^{*}k_z=\overline{F(z)}k_z$.

Therefore
$$
\overline{F(w)}F(z)\langle  C_B^{*}k_w, C_B^{*}k_z \rangle_{H^2}=\langle C_B^{*}M_{F}^{*}k_w, C_B^{*}M_{F}^{*}k_z \rangle_{H^2},
$$
and by \eqref{1},
$$
\langle  k_w, k_z\rangle_{H^2}-\langle  C_B^{*}M_{F}^{*}k_w, C_B^{*}M_{F}^{*}k_z \rangle_{H^2}
$$
is a positive kernel.

For any $n$ distinct points $z^{(1)},\cdots,z^{(n)}$ in $\DD^2$ and complex numbers $c_1,\cdots, c_n$, define $h=\sum_{j=1}^{n}c_j k_{z^{(j)}}$. We then have

\begin{align*}
0&\leq \sum_{i,j=1}^n c_i\bar{c_j} \langle  k_{z^{(i)}}, k_{z^{(j)}}\rangle_{H^2}-\sum_{i,j=1}^n c_i\bar{c_j} \langle  C_B^{*}M_{F}^{*}k_{z^{(i)}}, C_B^{*}M_{F}^{*}k_{z^{(j)}}\rangle_{H^2}\\
&=||h||_2^2-||C_B^{*}M_{F}^{*}h||_2^2.
\end{align*}

Since the closure of $\{k_z\}_{z\in\DD^2}$ span $H^2(\DD^2)$, it follows that $||C_B^{*}M_{F}^*||\leq 1.$ Also we know from \eqref{F} that $F$ is bounded below. Thus
\begin{align*}
&||C_B||=||C_B^*||=||C_B^*M_{F}^*M_{1/{F}}^*||\leq  ||C_B^{*}M_{F}^*||\cdot||M_{1/{F}}^*||\\
\leq&||M_{1/{F}}^*||=||{1\over{F}}||_\infty =\left( \frac{1+|\phi(0)|}{1-|\phi(0)|} \right)^{1\over 2}\cdot \left( \frac{1+|\psi(0)|}{1-|\psi(0)|} \right)^{1\over 2}.
\end{align*}

\end{proof}

\begin{rem}
For an analytic self map $b$ of $\DD$, Jury in \cite{ju07} showed that if \beq\label{j1}\frac{1-\overline{b(w)}b(z)}{1-\bw z}\eeq is a positive kernel, then $C_b$ is bounded on $H^2(\DD)$. This condition is automatically satisfied since \eqref{j1} is the reproducing kernel of the de Branges-Rovnyak space associated with $b$ (\cite{deb-rov1}). Thus every composition operator on $H^2(\DD)$ is bounded. However, in general the positivity of $R$ is not a necessary condition for the boundedness of $C_B$. For example, let $B_r(z_1, z_2)=(rz_1, rz_1)$, and denote the corresponding function $R$ as $R_r$. Then it is easy to see that $C_{B_r}$ is bounded for every $r\in (0,1)$, and is unbounded for $r=1$.  If the functions $R_r$ were positive kernels for all $r<1$, then the pointwise limit $R_1$ would be a positive kernel as well, which would make $C_{B_1}$ bounded. It is a contradiction.

\end{rem}

\begin{rem}
A similar argument can be applied to some weighted Bergman spaces $L_a^\Ga(\DD^2)$, for $\Ga\in \ZZ^{+}$. These are reproducing kernel Hilbert spaces with reproducing kernels
$$k^\Ga(z,w)=\frac{1}{(1-\bar{w_1}z_1)^\Ga(1-\bar{w_2}z_2)^\Ga}.$$ If $\Ga=1$, then $L_a^\Ga(\DD^2)$ is just the Hardy space.
The corresponding sufficient condition for boundedness will be:
$$
(R(z,w))^\Ga=\left( \frac{1-\overline{\phi(w)}{\phi(z)}}{1-\bar{w_1}z_1}\cdot \frac{1-\overline{\psi(w)}{\psi(z)}}{1-\bar{w_2}z_2}\right)^\Ga
$$
is a positive kernel. If $R$ is a positive kernel and $\Ga$ is a positive integer, then $R^\Ga$ is positive as well (Proposition \ref{prop}(2)). Thus for $B$ satisfying the conditions in Theorem \ref{M}, $C_B$ is also bounded on $L^\Ga_a(\DD^2)$, for any positive interger $\Ga$.
\end{rem}

Since $||\phi||_\infty\leq 1$, it is a contractive multiplier of $H^2(\DD^2)$, by Theorem \ref{m}
$$\frac{1-\overline{\phi(w)}{\phi(z)}}{(1-\bar{w_1}z_1)(1-\bar{w_2}z_2)}$$ is a positive kernel, denoted as $k_w^\phi$. The Hilbert space with the above reproducing kernel, denoted by $\mathcal{H}(\phi)$, was introduced in \cite{abds}.  $\mathcal{H}(\phi)$ can be viewed as an analog of the de Branges-Rovnyak space on $\DD$ and is called a sub-Hardy Hilbert space of the bidisk (the terminology comes from the title of Sarason's book \cite{sar94}).

\begin{exm}\label{ex0}
We can easily show that $R$ is a positive kernel for some special cases, thus give some examples of bounded composition operators. These results were also obtained in \cite{sish}.
\begin{enumerate}
\item If one of $\phi, \psi$ is a constant (say $\psi$ is a constant with $|\psi|\leq 1$), then $R$ is a positive kernel because it is the positive kernel $\frac{1-\overline{\phi(w)}{\phi(z)}}{(1-\bar{w_1}z_1)(1-\bar{w_2}z_2)}$ multiplied by a positive constant.
\item If $\phi, \psi$ are one-variable functions in $z_1$ and $z_2$, respectively ($\phi=\phi(z_1)$, $\psi=\psi(z_2)$), then
$\frac{1-\overline{\phi(w_1)}\phi(z_1)}{1-\bar{w_1}z_1}$ and $\frac{1-\overline{\psi(w_2)}\psi(z_2)}{1-\bar{w_2}z_2}$ are positive kernels. By Proposition \ref{prop}, their product $R$ is a positive kernel.
\end{enumerate}
\end{exm}

In general, using Theorem \ref{m}, $R$ is a positive kernel if and only if $\psi$ is a contractive multiplier of $\mathcal{H}(\phi)$. So we obtained the following
\begin{cor}\label{cor}
Let $B=(\phi, \psi)$ be an analytic map from $\DD^2$ to $\DD^2$, where $\phi$ and $\psi$ are in $H^\infty(\DD^2)$ and bounded by $1$. If $\psi$ is a contractive multiplier of $\mathcal{H}(\phi)$, then $C_B$ is a bounded composition operator on $H^2(\DD^2)$. Moreover, $$||C_B||_{H^2\to H^2}\leq \left( \frac{1+|\phi(0)|}{1-|\phi(0)|} \right)^{1\over 2}\cdot \left( \frac{1+|\psi(0)|}{1-|\psi(0)|} \right)^{1\over 2}.$$
\end{cor}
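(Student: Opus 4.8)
The plan is to deduce this directly from Theorem \ref{M} together with the multiplier characterization in Theorem \ref{m}. By Theorem \ref{M}, once one knows that the function $R(z,w)$ is a positive kernel on $\DD^2\times\DD^2$, both the boundedness of $C_B$ on $H^2(\DD^2)$ and the stated norm estimate follow at once. So the only thing that needs to be checked is that the hypothesis ``$\psi$ is a contractive multiplier of $\mathcal{H}(\phi)$'' is precisely the statement that $R$ is a positive kernel.

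To see this, I would first recall that $\mathcal{H}(\phi)=\mathcal{H}(k^\phi)$, where
$$k^\phi(z,w)=\frac{1-\overline{\phi(w)}\phi(z)}{(1-\bar{w_1}z_1)(1-\bar{w_2}z_2)}$$
is a positive kernel by Theorem \ref{m} applied to the contractive multiplier $\phi$ of $H^2(\DD^2)$. Applying Theorem \ref{m} again, this time with $\Gd=1$ and $K=k^\phi$, the function $\psi$ is a contractive multiplier of $\mathcal{H}(\phi)$ if and only if
$$\bigl(1-\psi(z)\overline{\psi(w)}\bigr)\cdot k^\phi(z,w)$$
is a positive kernel. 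Since the scalar factors commute, rearranging the factors of this product gives
$$\frac{\bigl(1-\overline{\phi(w)}\phi(z)\bigr)\bigl(1-\overline{\psi(w)}\psi(z)\bigr)}{(1-\bar{w_1}z_1)(1-\bar{w_2}z_2)}=\frac{1-\overline{\phi(w)}\phi(z)}{1-\bar{w_1}z_1}\cdot\frac{1-\overline{\psi(w)}\psi(z)}{1-\bar{w_2}z_2}=R(z,w),$$
so the hypothesis on $\psi$ is exactly the assertion that $R$ is a positive kernel.

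With $R$ now known to be a positive kernel, Theorem \ref{M} applies verbatim and yields both the boundedness of $C_B$ and the bound $||C_B||\le\bigl(\frac{1+|\phi(0)|}{1-|\phi(0)|}\bigr)^{1/2}\bigl(\frac{1+|\psi(0)|}{1-|\psi(0)|}\bigr)^{1/2}$, which is the assertion of the corollary. I do not anticipate any substantive obstacle: the corollary is a reformulation of Theorem \ref{M} in the language of multipliers of the sub-Hardy Hilbert space $\mathcal{H}(\phi)$, and the only points requiring a little care are the bookkeeping in the algebraic identity above and the observation that the self-adjointness part of the definition of a positive kernel is automatic for $R$ and for $(1-\psi(z)\overline{\psi(w)})\,k^\phi(z,w)$.
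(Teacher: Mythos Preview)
Your proposal is correct and follows essentially the same approach as the paper: the paper simply notes that, by Theorem \ref{m}, $R$ is a positive kernel if and only if $\psi$ is a contractive multiplier of $\mathcal{H}(\phi)$, and then invokes Theorem \ref{M}. Your argument spells out this equivalence in a bit more detail (identifying $(1-\psi(z)\overline{\psi(w)})k^\phi(z,w)$ with $R(z,w)$), but the route is the same.
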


In the next two sections, we study multipliers of $\mathcal{H}(\phi)$ and find nontrivial examples (other than those in Example \ref{ex0}) of bounded composition operators on $H^2(\DD^2)$ or $L_a^\Ga(\DD^2)$.

\section{Multipliers of sub-Hardy Hilbert spaces of the bidisk}
 For a bounded linear operator $A: H^2 \to H^2$, define the range space $$\mathcal{M}(A)=A H^2$$  and endow it with the inner product
$$\langle Af, Ag \rangle_{\mathcal{M}(A)}=\langle f, g \rangle_{H^2},\qq f,g\in H^2\ominus \m{Ker}A.$$
For $\phi\in H^\infty(\DD^2)$ with $||\phi||_\infty\leq 1$, let $A=(I-T_\phi T_{\overline{\phi}})^{1/2}$. It is easy to see that
\beq\label{b}
\mathcal{H}(\phi)=\mathcal{M}(A).
\eeq
For simplicity, we shall use $\mathcal{M}(\varphi)$ to denote $\mathcal{M}(T_\varphi)$, for $\varphi\in L^\infty(\TT^2)$.

It is easy to see that if $B$ maps $\DD^2$ into a compact subset of $\DD^2$, then $C_B$ is bounded on $H^2(\DD^2)$. In the rest of the note, we assume $||\phi||_\infty=1$. We first study the case when $\phi$ is an inner function.
The following theorem shows that if $\psi$ is a multiplier of $\mathcal{H}(\phi)$ for an inner function $\phi$, we only get the bounded composition operators in Example \ref{ex0}.

\begin{thm}
Let $\phi$ be a nonconstant inner function. If $\psi$ is a nonconstant multiplier of $\mathcal{H}(\phi)$, then one of the functions $\phi, \psi$ is a one-variable function in $z_1$ and the other is a one-variable function in $z_2$.
\end{thm}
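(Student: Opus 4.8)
The plan is to identify $\mathcal H(\phi)$ with a model space, translate the multiplier condition into the vanishing of a product of Hankel operators, and then read off the separation of variables from an explicit computation with Taylor coefficients.

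\emph{Step 1: $\mathcal H(\phi)$ is the model space $K_\phi:=H^2(\DD^2)\ominus\phi H^2(\DD^2)$.} Because $\phi$ is inner, $M_\phi$ is an isometry of $H^2(\DD^2)$, so $T_\phi T_{\overline\phi}=M_\phi M_\phi^{*}$ is the orthogonal projection onto the closed subspace $\phi H^2(\DD^2)$; hence $A=(I-T_\phi T_{\overline\phi})^{1/2}$ is itself the orthogonal projection onto $K_\phi$, and by \eqref{b} the equality $\mathcal H(\phi)=\mathcal M(A)=K_\phi$ holds with the $H^2$ inner product. A multiplier $\psi$ of $K_\phi$ then lies in $H^\infty(\DD^2)$: it is bounded by \eqref{norm}, and for each $w\in\DD^2$ the kernel $k_w^\phi\in K_\phi$ is analytic with $k_w^\phi(w)=\frac{1-|\phi(w)|^2}{(1-|w_1|^2)(1-|w_2|^2)}>0$ — here $|\phi|<1$ on $\DD^2$ since $\phi$ is nonconstant — so $\psi=(\psi k_w^\phi)/k_w^\phi$ is analytic near $w$; varying $w$, $\psi$ is analytic on all of $\DD^2$.

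\emph{Step 2: the operator identity.} Since $\psi K_\phi\subseteq K_\phi$ and trivially $\psi\,\phi H^2\subseteq\phi H^2$, the subspace $\phi H^2$ reduces $M_\psi$, so $M_\psi$ commutes with $T_\phi T_{\overline\phi}$. As $M_\psi=T_\psi$ commutes with $T_\phi$ and $T_\phi$ is injective, this gives $[T_\psi,T_{\overline\phi}]=0$, and by \eqref{han} (with $H_\phi=H_\psi=0$, both symbols being analytic) this is the same as
\[
H_{\overline\psi}^{*}H_{\overline\phi}=0,\qquad\text{i.e.}\qquad \psi\,(I-P)(\overline\phi h)\perp H^2(\DD^2)\quad\text{for every }h\in H^2(\DD^2).
\]
Moreover each power $\psi^{n}$ is again a multiplier of $K_\phi$ (because $M_{\psi^{n}}|_{K_\phi}=(M_\psi|_{K_\phi})^{n}$), so $H_{\overline{\psi^{n}}}^{*}H_{\overline\phi}=0$ for all $n\ge0$.

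\emph{Step 3: eigenrelations.} From $H_{\overline\psi}^{*}H_{\overline\phi}=0$ we have $\operatorname{ran}H_{\overline\phi}\subseteq\ker H_{\overline\psi}^{*}$; evaluating on $H_{\overline\phi}1=\overline\phi-\overline{\phi(0)}$ gives $T_{\overline\phi}\psi=\overline{\phi(0)}\psi$, and by symmetry $T_{\overline\psi}\phi=\overline{\psi(0)}\phi$. Applying the first relation to each multiplier $\psi^{n}$ shows that $\overline{\operatorname{span}}\{\psi^{n}:n\ge0\}$ lies in the $T_{\overline{z_1}}$- and $T_{\overline{z_2}}$-invariant subspace $\ker\bigl(T_{\overline\phi}-\overline{\phi(0)}I\bigr)$, already a strong constraint on how $\psi$ may depend on the two variables.

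\emph{Step 4: extracting the splitting — the crux.} Write $\phi=\sum\phi_{mn}z_1^{m}z_2^{n}$ and $\psi=\sum\psi_{mn}z_1^{m}z_2^{n}$, so that $\phi=\sum_j\phi_j(z_2)z_1^{j}$ and $\psi=\sum_p\psi_p(z_2)z_1^{p}$ with $\phi_j(z_2)=\sum_n\phi_{jn}z_2^{n}$, $\psi_p(z_2)=\sum_n\psi_{pn}z_2^{n}$. Testing $\psi\,(I-P)(\overline\phi h)\perp H^2(\DD^2)$ against the monomials $h=z_1^{p}z_2^{q}$ and comparing Fourier coefficients produces an infinite family of relations among the $\phi_{mn}$ and $\psi_{mn}$: letting the $z_2$-degree of $h$ tend to infinity isolates $\sum_{j\ge1}\overline{\phi_j(z_2)}\,\psi_{j+a}(z_2)=0$ on $\TT$ for all $a\ge0$, letting the $z_1$-degree tend to infinity gives the mirror relations in $z_1$, and the relations for finite $(p,q)$ give still more (for instance when $\psi=z_2$ they already force $\phi_{mn}=0$ for all $n\ge1$, i.e.\ $\phi=\phi(z_1)$). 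Assembling these relations, and using that $\phi$ is a nonconstant \emph{inner} function and $\psi$ is nonconstant (together with Step~3), one concludes that either all $\phi_{mn}$ with $n\ge1$ vanish — so $\phi$ is a nonconstant inner function of $z_1$ alone, and the surviving relations then force $\psi=\psi(z_2)\in H^\infty(\DD)$ — or, symmetrically, $\phi=\phi(z_2)$ and $\psi=\psi(z_1)$, which is the claim. The main obstacle is precisely this last bookkeeping — turning the infinitely many coefficient identities into the statement that one function depends on $z_1$ only and the other on $z_2$ only; Steps~1–3 are routine once the identifications are set up.
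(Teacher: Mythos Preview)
Your Steps 1--3 are correct: the identification $\mathcal H(\phi)=K_\phi$, the commutation $[T_\psi,T_{\overline\phi}]=0$, and the resulting identity $H_{\overline\psi}^{*}H_{\overline\phi}=0$ (hence $T_{\overline\phi}\psi=\overline{\phi(0)}\psi$, etc.) are all valid and in fact give an exact reformulation of the hypothesis. The gap is Step~4. You write down some of the coefficient relations that follow from $H_{\overline\psi}^{*}H_{\overline\phi}=0$, verify the conclusion in the toy case $\psi=z_2$, and then assert that ``assembling these relations'' yields the dichotomy in general --- but you do not actually carry this out, and you explicitly flag it as ``the main obstacle.'' That is the missing idea: nothing you have written explains, for general nonconstant $\phi$ and $\psi$, why the infinite system $\sum_{j\ge1}\overline{\phi_j}\psi_{j+a}=0$ (together with its mirror and the finite-$(p,q)$ relations) forces one function to depend only on $z_1$ and the other only on $z_2$. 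Passing to powers $\psi^n$ does not obviously help either, since it just reproduces the same type of relations with $\psi$ replaced by $\psi^n$.

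The paper sidesteps this bookkeeping entirely. Instead of extracting coefficient identities from $H_{\overline\psi}^{*}H_{\overline\phi}=0$, it tests the multiplier property on the single element $T_{\bar z_1}\phi\in\mathcal H(\phi)$, applies $T_{\overline\phi}$ (which annihilates $K_\phi$), and simplifies using the backward-shift formula to obtain the clean operator equation
\[
T_{\bar z_1}\psi=T_{(\phi\circ P_1)\overline\phi}\,T_{\bar z_1}\psi.
\]
A one-line norm estimate then forces either $T_{\bar z_1}\psi=0$ (so $\psi=\psi(z_2)$) or $(\phi\circ P_1)\overline\phi\equiv1$ on $\TT^2$ (so $\phi=\phi(z_2)$); the same argument with $T_{\bar z_2}\phi$ gives the other variable. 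The leverage comes from choosing the \emph{specific} test vector $T_{\bar z_1}\phi$, which collapses your infinite family of relations into a single equation amenable to an equality-case argument. Your framework is sound up to Step~3, but to finish you would need either to supply the combinatorics you defer, or --- more efficiently --- to feed a well-chosen $h$ into $H_{\overline\psi}^{*}H_{\overline\phi}h=0$ that reproduces the paper's norm trick.
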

\begin{proof}
By \cite{abds}*{Theorem 2.5}, $T_{\bar{z}_1}\phi\in \mathcal{H}(\phi)$ and then $$\psi T_{\bar{z}_1}\phi\in \mathcal{H}(\phi).$$
If $\phi$ is an inner function, then $\cH(\phi)=H^2(\DD^2)\ominus \phi H^2(\DD^2)$ is a closed subspace of $H^2(\DD^2)$. For every $f\in \cH(\phi)$, $g\in H^2(\DD^2)$, we see that
$$
\la T_{\bar{\phi}}f, g\ra=\la \bar{\phi} f, g\ra=\la f, \phi g\ra=0,
$$
which implies $$T_{\bar{\phi}}(\cH(\phi))=0.$$
So we have
\begin{align*}
0&=T_{\overline{\phi}} (\psi T_{\bar{z}_1}\phi )=T_{\overline{\phi}}\left(\psi\cdot \frac{\phi-\phi\circ P_1}{z_1}\right)\\
&=P(\bar{z}_1\psi(1-\overline{\phi}\cdot(\phi\circ P_1)))\\
&=T_{\bar{z}_1}\psi-P((\phi\circ P_1)\overline{\phi}\bar{z}_1\psi)\\
&=T_{\bar{z}_1}\psi-T_{(\phi\circ P_1)\overline{\phi}}T_{\bar{z}_1}\psi.
\end{align*}
The last equality holds because
\begin{align*}
P((\phi\circ P_1)\overline{\phi}\bar{z}_1\psi)&=P((\phi\circ P_1)\overline{\phi} \cdot(P(\bar{z}_1\psi)+(I-P)(\bar{z}_1\psi)))\\
&=T_{(\phi\circ P_1)\overline{\phi}}T_{\bar{z}_1}\psi+ P((\phi\circ P_1)\overline{\phi}\cdot (I-P)(\bar{z}_1\psi))\\
&=T_{(\phi\circ P_1)\overline{\phi}}T_{\bar{z}_1}\psi+ P((\phi\circ P_1)\overline{\phi}\cdot {\bz_1} (\psi\circ P_1))\\
&=T_{(\phi\circ P_1)\overline{\phi}}T_{\bar{z}_1}\psi.
\end{align*}
Thus
\begin{align}\label{p}
||T_{\bar{z}_1}\psi||_2&=||T_{(\phi\circ P_1)\overline{\phi}}T_{\bar{z}_1}\psi||_2=||P((\phi\circ P_1)\overline{\phi}\cdot T_{\bar{z}_1}\psi)||\\
\nnb&\leq ||(\phi\circ P_1)\overline{\phi}\cdot T_{\bar{z}_1}\psi||_2 \leq ||(\phi\circ P_1)\overline{\phi}||_\infty \cdot||T_{\bar{z}_1}\psi||_2.
\end{align}
Suppose $T_{\bar{z}_1}\psi\neq 0$, which means $\psi$ is not a one variable function in $z_2$. Then $||(\phi\circ P_1)\overline{\phi}||_\infty=1$ and equalities hold in \eqref{p}. We have
$$
T_{\bar{z}_1}\psi=P((\phi\circ P_1)\overline{\phi}\cdot T_{\bar{z}_1}\psi)=(\phi\circ P_1)\overline{\phi}\cdot T_{\bar{z}_1}\psi,
$$
which implies $\phi(z)=(\phi\circ P_1)(z)=\phi(0, z_2)$.

Therefore, one of the functions $\phi, \psi$ is a one-variable function in $z_2$. Applying the same argument for $T_{\bar{z}_2}\phi$, we know that one of the functions $\phi, \psi$ is a one-variable function in $z_1$ as well.
\end{proof}

Next, we assume there is a nonconstant function $a\in H^\infty(\DD^2)$ such that $$|a|^2+|\phi|^2=1$$ a.e. on $\TT^2$. We call $a$ the Pythagorean mate for $\phi$.
\begin{rem}
In the one variable case, a function $f \in H^2(\DD)$ has a Pythagorean mate if and only if $log(1-|f|^2)\in L^1(\TT)$, which means $f$ is not an extreme point of the unit ball of $H^\infty(\DD)$. The Pythagorean mate of $f$, if it exists, is an outer function and can be chosen uniquely so that it has positive value at $0$. However, the condition $log(1-|\phi|^2)\in L^1(\TT^2)$ is only necessary for $\phi\in H^2(\DD^2)$ to have a Pythagorean mate (see \cite{rud69}*{Chapter 3.5}) and a function in $H^2(\DD^2)$ may have a Pythagorean mate vanishing at $0$.
\end{rem}

Suppose $\phi$ has a Pythagorean mate $a$ and notice that $$T_{\ba} T_a=I- T_{\overline{\phi}}T_\phi \leq I-T_\phi T_{\overline{\phi}}.$$
By Douglas's Lemma (\cite{dou1}*{Theorem 1}) and \eqref{b}, $\mathcal{M}(\ba)\subset\mathcal{H}(\phi)$. In fact we have
\begin{lem}\label{5}
If $\phi$ is in $H^\infty(\DD^2)$ and has a Pythagorean mate $a$, then
$$
\mathcal{H}(\phi)\cap\mathcal{M}(\phi)=T_\phi \mathcal{M}(\ba).
$$
Moreover, every multiplier of $\mathcal{H}(\phi)$ is contained in $\mathcal{M}(\ba)$ and is a multiplier of $\mathcal{M}(\ba)$.
\end{lem}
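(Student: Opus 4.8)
\smallskip
\noindent\emph{Proof strategy.} The plan is to reformulate both assertions as inclusions of operator ranges and settle them with Douglas's criterion (\cite{dou1}*{Theorem 1}), the only structural input being the Pythagorean identity on the torus. Since $\phi,a\in H^\infty(\DD^2)$ one has $T_{\bar a}=T_a^{*}$, $T_\phi^{*}T_\phi=T_{|\phi|^2}$, hence
$$T_{\bar a}T_a=T_{|a|^2}=T_{1-|\phi|^2}=I-T_\phi^{*}T_\phi,$$
while $A:=(I-T_\phi T_{\bar\phi})^{1/2}$ is self-adjoint with $AA^{*}=A^{2}=I-T_\phi T_\phi^{*}$, and by definition $\mathcal{H}(\phi)=\operatorname{ran}A$, $\mathcal{M}(\bar a)=\operatorname{ran}T_{\bar a}$, $\mathcal{M}(\phi)=\phi H^2=\operatorname{ran}T_\phi$. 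I will also use that $M_\phi=T_\phi$ is injective on $H^2(\DD^2)$ (as $\phi\not\equiv0$), so the factorization $g=\phi f$ of an element of $\mathcal{M}(\phi)$ is unique, and that $\mathcal{M}(\bar a)$ is a linear space.

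For the inclusion $T_\phi\mathcal{M}(\bar a)\subseteq\mathcal{H}(\phi)\cap\mathcal{M}(\phi)$, observe $T_\phi\mathcal{M}(\bar a)=\operatorname{ran}(T_\phi T_{\bar a})$ and compute, with $X:=T_\phi T_\phi^{*}$ (so $0\le X\le I$ since $\|\phi\|_\infty\le1$),
$$(T_\phi T_{\bar a})(T_\phi T_{\bar a})^{*}=T_\phi(T_{\bar a}T_a)T_\phi^{*}=T_\phi(I-T_\phi^{*}T_\phi)T_\phi^{*}=X-X^{2}\le I-X=AA^{*},$$
the inequality being $(I-X)^{2}\ge0$; Douglas's lemma then gives $\operatorname{ran}(T_\phi T_{\bar a})\subseteq\operatorname{ran}A=\mathcal{H}(\phi)$, and the inclusion in $\phi H^2=\mathcal{M}(\phi)$ is clear. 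Conversely, take $g\in\mathcal{H}(\phi)\cap\mathcal{M}(\phi)$ and write $g=A\xi=\phi f$ with $\xi,f\in H^2$. Applying $T_\phi^{*}$ to $T_\phi f=A\xi$ and using $T_\phi^{*}T_\phi=I-T_{\bar a}T_a$ yields
$$f=T_{\bar a}T_af+T_\phi^{*}A\xi=T_{\bar a}(af)+T_\phi^{*}A\xi .$$
The first summand lies in $\operatorname{ran}T_{\bar a}=\mathcal{M}(\bar a)$; for the second, with $Z:=T_\phi^{*}T_\phi$ (so $0\le Z\le I$),
$$(T_\phi^{*}A)(T_\phi^{*}A)^{*}=T_\phi^{*}A^{2}T_\phi=T_\phi^{*}(I-T_\phi T_\phi^{*})T_\phi=Z-Z^{2}\le I-Z=T_{\bar a}T_a=T_{\bar a}T_{\bar a}^{*},$$
again by $(I-Z)^{2}\ge0$, so Douglas's lemma gives $T_\phi^{*}A\xi\in\operatorname{ran}T_{\bar a}=\mathcal{M}(\bar a)$. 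Hence $f\in\mathcal{M}(\bar a)$ and $g=\phi f\in T_\phi\mathcal{M}(\bar a)$, which proves $\mathcal{H}(\phi)\cap\mathcal{M}(\phi)=T_\phi\mathcal{M}(\bar a)$.

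For the ``moreover'' part, let $\psi$ be a multiplier of $\mathcal{H}(\phi)$. Being a multiplier of a space of holomorphic functions, $\psi$ is holomorphic, and by \eqref{norm} it is bounded, so $\psi\in H^\infty(\DD^2)$ and $\psi u\in H^2$ for every $u\in H^2$. If $u\in\mathcal{M}(\bar a)$ then $\phi u\in\mathcal{H}(\phi)\cap\mathcal{M}(\phi)$ by the first part, hence $\psi\phi u\in\mathcal{H}(\phi)$; since also $\psi\phi u=\phi(\psi u)\in\phi H^2=\mathcal{M}(\phi)$, we get $\phi(\psi u)\in\mathcal{H}(\phi)\cap\mathcal{M}(\phi)=T_\phi\mathcal{M}(\bar a)$, and cancelling $\phi$ gives $\psi u\in\mathcal{M}(\bar a)$; thus $\psi$ is a multiplier of $\mathcal{M}(\bar a)$. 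Finally $1\in\mathcal{M}(\bar a)$: since $a$ is a Pythagorean mate, $\log(1-|\phi|^2)=\log|a|^{2}\in L^1(\TT^2)$, so there is an outer $\tilde a\in H^\infty(\DD^2)$ with $|\tilde a|=|a|$ a.e. on $\TT^2$ and $\tilde a(0)>0$ (outer functions are zero-free on $\DD^2$); then $T_{\bar a}T_a=T_{|a|^2}=T_{|\tilde a|^2}$, so $\langle T_{\bar a}T_ah,h\rangle=\|\tilde a h\|_2^{2}\ge|\tilde a(0)h(0)|^{2}=\tilde a(0)^{2}|\langle h,1\rangle|^{2}$ for all $h\in H^2$, and Douglas's lemma applied to the rank-one map $c\mapsto c\cdot1$ gives $1\in\operatorname{ran}T_{\bar a}=\mathcal{M}(\bar a)$. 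Consequently $\psi=\psi\cdot1\in\mathcal{M}(\bar a)$.

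The argument is essentially bookkeeping with Douglas's lemma, so I do not anticipate a serious obstacle; the only step that genuinely uses function theory rather than operator algebra is $1\in\mathcal{M}(\bar a)$, where one invokes log-integrability of $1-|\phi|^2$ and a zero-free outer function with prescribed boundary modulus. The part most likely to require a moment's thought is the converse inclusion above, namely recognizing that one should apply $T_\phi^{*}$ to $T_\phi f=A\xi$ and split $f$ as $T_{\bar a}(af)+T_\phi^{*}A\xi$; mild care is also needed so that the range factorizations are legitimate (injectivity of $M_\phi$ and that $\mathcal{M}(\bar a)$ is closed under addition), but these are routine.
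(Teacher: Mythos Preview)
Your Douglas--lemma treatment of the equality $\mathcal{H}(\phi)\cap\mathcal{M}(\phi)=T_\phi\mathcal{M}(\bar a)$ is correct and in fact more self--contained than the paper's, which simply cites Sarason's book for this identity. The argument that a multiplier $\psi$ of $\mathcal{H}(\phi)$ multiplies $\mathcal{M}(\bar a)$ into itself is also essentially the same as the paper's: both use the first part together with injectivity of $T_\phi$.

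The genuine gap is in your proof that $1\in\mathcal{M}(\bar a)$. You assert that because $\log|a|^2\in L^1(\TT^2)$ there is an \emph{outer} $\tilde a\in H^\infty(\DD^2)$ with $|\tilde a|=|a|$ on $\TT^2$ and $\tilde a(0)>0$. That implication is a one--variable fact which does \emph{not} carry over to the bidisk: on $\DD^2$ there is in general no inner--outer factorization, and a bounded analytic function with a prescribed log--integrable boundary modulus need not admit a zero--free representative. The paper itself flags this in the Remark immediately preceding the Lemma (``the condition $\log(1-|\phi|^2)\in L^1(\TT^2)$ is only \emph{necessary}\dots\ and a function in $H^2(\DD^2)$ may have a Pythagorean mate vanishing at $0$''), and later uses the explicit example $a(z)=(z_1-z_2)/2$, for which $a(0)=0$. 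So your Douglas inequality $\|ah\|_2^2\ge|\tilde a(0)|^2|h(0)|^2$ is not justified.

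The paper's fix is completely elementary and avoids any function--theoretic machinery: choose a monomial $z_1^{k_0}z_2^{j_0}$ of minimal total degree $n=k_0+j_0$ among those with $\langle a, z_1^{k_0}z_2^{j_0}\rangle\neq0$. Any monomial $z_1^kz_2^j$ with $k\le k_0$, $j\le j_0$ and $(k,j)\neq(k_0,j_0)$ has $k+j<n$, so its coefficient in $a$ vanishes; hence
\[
T_{\bar a}\bigl(z_1^{k_0}z_2^{j_0}\bigr)=\overline{\langle a, z_1^{k_0}z_2^{j_0}\rangle}\in\CC\setminus\{0\},
\]
and therefore $1\in\operatorname{ran}T_{\bar a}=\mathcal{M}(\bar a)$. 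Substituting this short argument for your outer--function step makes your proof complete.
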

\begin{proof}
The equality is a special case of the more general result \cite{sar94}*{I-9}. Suppose $f$ is a multiplier of $\mathcal{H}(\phi)$. Then
$$
 T_\phi T_f \mathcal{M}(\ba)=T_f T_\phi \mathcal{M}(\ba)=T_f\mathcal{H}(\phi)\cap T_f\mathcal{M}(\phi)\subset \mathcal{H}(\phi)\cap\mathcal{M}(\phi)=T_\phi \mathcal{M}(\ba),
$$
which implies $f$ is a multiplier of $\mathcal{M}(\ba)$.

Let $n=\min \{k+j : \la a, z_1^k z_2^j \ra_{H_2}\neq 0 \}$ ($n$ is the lowest degree of the non-vanishing terms in the Fourier expansion of $a$).
Pick a function $z_1^{k_0}z_2^{j_0}$ such that $k_0+j_0=n$ and  $\la a, z_1^k z_2^j \ra_{H_2}\neq 0$, we see that $T_{\ba} (z_1^{k_0}z_2^{j_0})=\ol{\la a, z_1^{k_0}z_2^{j_0} \ra}_{H_2}$. Thus $\mathcal{M}(\ba)$ contains constants and then every multiplier of $\mathcal{H}(\phi)$ is contained in $\mathcal{M}(\ba)$.
\end{proof}

\begin{lem}\label{6}
Suppose $\phi$ is in $H^\infty(\DD^2)$ and has a Pythagorean mate $a$. If $h$ is in  $H^2(\DD^2)$, then $h\in\mathcal{H}(\phi)$ if and only if $T_{\bar{\phi}} h\in \mathcal{M}(\ba)$.
\end{lem}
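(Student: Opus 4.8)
The plan is to split an arbitrary $h\in H^2(\DD^2)$ into a summand that automatically lies in $\mathcal{H}(\phi)$ and a summand that automatically lies in $\mathcal{M}(\phi)$, and then to reduce the assertion to Lemma \ref{5}. Set $A=(I-T_\phi T_{\overline{\phi}})^{1/2}$, so that $\mathcal{H}(\phi)=\mathcal{M}(A)$ by \eqref{b}; in particular, as a subset of $H^2(\DD^2)$, $\mathcal{H}(\phi)$ is exactly the range of $A$. For any $h$ we have
\[
h=(I-T_\phi T_{\overline{\phi}})h+T_\phi T_{\overline{\phi}}h=A(Ah)+\phi\,(T_{\overline{\phi}}h),
\]
and the first term lies in the range of $A$, hence in $\mathcal{H}(\phi)$, while the second lies in $\phi H^2=\mathcal{M}(\phi)$. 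Thus, putting $v=(I-T_\phi T_{\overline{\phi}})h$ and $u=T_{\overline{\phi}}h$, we get $h=v+\phi u$ with $v\in\mathcal{H}(\phi)$ and $\phi u\in\mathcal{M}(\phi)$.

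The reduction then goes as follows. Since $v$ is a fixed member of the linear space $\mathcal{H}(\phi)$, we have $h\in\mathcal{H}(\phi)$ if and only if $\phi u=h-v\in\mathcal{H}(\phi)$; and because $\phi u=T_\phi u\in\mathcal{M}(\phi)$ in any case, this is equivalent to $\phi u\in\mathcal{H}(\phi)\cap\mathcal{M}(\phi)$. By Lemma \ref{5}, $\mathcal{H}(\phi)\cap\mathcal{M}(\phi)=T_\phi\mathcal{M}(\ba)$, so the condition becomes $\phi u=\phi w$ for some $w\in\mathcal{M}(\ba)$. As $\|\phi\|_\infty=1$, the symbol $\phi$ is not identically zero, so $M_\phi$ is injective on $H^2(\DD^2)$ and $\phi u=\phi w$ forces $u=w$. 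Hence $h\in\mathcal{H}(\phi)$ if and only if $u=T_{\overline{\phi}}h\in\mathcal{M}(\ba)$, which is the claim.

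The only substantial ingredient is Lemma \ref{5}; everything else is bookkeeping inside $H^2(\DD^2)$, and the key choice is to take the $\mathcal{M}(\phi)$-part of $h$ to be $\phi\cdot T_{\overline{\phi}}h$, so that its membership in $\mathcal{M}(\ba)$ is literally the desired condition. The one point that needs care is the distinction between $\mathcal{M}(\cdot)$ and $\mathcal{H}(\phi)$ as Hilbert spaces carrying range norms and as plain sets of functions: the argument uses only set inclusions and the vector-space structure, never a norm estimate. I expect the implication $T_{\overline{\phi}}h\in\mathcal{M}(\ba)\Rightarrow h\in\mathcal{H}(\phi)$ to be the one a reader would find least obvious, but in this approach both directions emerge at once from the decomposition above. (A more computational alternative for the easy inclusion $T_{\overline{\phi}}\mathcal{H}(\phi)\subseteq\mathcal{M}(\ba)$ is Douglas's lemma applied to $T_{\overline{\phi}}(I-T_\phi T_{\overline{\phi}})T_\phi\le I-T_{\overline{\phi}}T_\phi=T_{\ba}T_a$, which is just $(I-T_{\overline{\phi}}T_\phi)^2\ge 0$ once one writes $T_{\overline{\phi}}T_\phi=T_{|\phi|^2}$, $T_{\ba}T_a=T_{|a|^2}$ and uses $|\phi|^2+|a|^2=1$ a.e. on $\TT^2$.)
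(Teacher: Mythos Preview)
Your argument is correct. The decomposition $h=A^2h+T_\phi T_{\overline{\phi}}h$ with $A=(I-T_\phi T_{\overline{\phi}})^{1/2}$ places the first summand in $\mathcal{H}(\phi)=\mathcal{M}(A)$ and the second in $\mathcal{M}(\phi)$, and the reduction to Lemma~\ref{5} via injectivity of $M_\phi$ is clean; both directions indeed fall out simultaneously.

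As for comparison with the paper: there is essentially nothing to compare against, since the paper does not prove the lemma but simply cites \cite{sar94}*{I-8} (the one-variable result, whose proof carries over verbatim). Your proof is therefore more self-contained than the paper's, though note that it still rests on Lemma~\ref{5}, which the paper likewise defers to \cite{sar94}*{I-9}. In effect you have shown that I-8 follows from I-9 by a short algebraic trick, which is a nice observation; Sarason's original argument for I-8 proceeds somewhat differently, working directly with the complementary-space characterization rather than reducing to the intersection formula. The parenthetical Douglas-lemma remark at the end is also correct but, as you note, superfluous given the main argument.
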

\begin{proof}
Again, this is a special case of the more general result \cite{sar94}*{I-8}.
\end{proof}

Lemma \ref{5} shows that multipliers of $\cH(\phi)$ are in the form $T_{\ba}h$, for some $h\in H^2(\DD^2)$. The next theorem gives a way to find multipliers of $\mathcal{H}(\phi)$. These conditions were first found in \cite{lotsar93} for the one-variable case. The proof we present here is essentially the same as in \cite{lotsar93}*{Theorem 2}.
\begin{thm}\label{con}
Suppose $\phi$ is in $H^\infty(\DD^2)$ and has a Pythagorean mate $a$. Let $\psi=T_{\ba}h$ for some $h\in H^2(\DD^2)$. If $H^*_{\bh} H_{\ba}$ and $H^*_{\bh} H_{\ol{\phi}}$ are bounded on $H^2(\DD^2)$, then $\psi$ is a multiplier of $\mathcal{H}(\phi)$.
\end{thm}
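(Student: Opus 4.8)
The plan is to show that $M_{\psi}$ extends from the polynomials to a bounded operator on $\mathcal{H}(\phi)$. Since $\phi$ admits a Pythagorean mate it is not an extreme point of the unit ball of $H^{\infty}(\DD^{2})$, so the polynomials are dense in $\mathcal{H}(\phi)$ (cf.\ \cite{abds}); as point evaluations are continuous on $\mathcal{H}(\phi)$, it suffices to produce a constant $C$, depending only on $\|h\|_{H^{2}}$, $\|\psi\|_{H^{2}}$, $\|H_{\bh}^{*}H_{\ba}\|$ and $\|H_{\bh}^{*}H_{\ol{\phi}}\|$, such that $\|\psi f\|_{\mathcal{H}(\phi)}\le C\|f\|_{\mathcal{H}(\phi)}$ for every polynomial $f$. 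Granting this, $M_{\psi}$ extends to a bounded operator $\widetilde{M}$ on $\mathcal{H}(\phi)$, and approximating an arbitrary $f\in\mathcal{H}(\phi)$ by polynomials (and using continuity of evaluations) gives $(\widetilde{M}f)(z)=\psi(z)f(z)$, i.e.\ $\psi$ is a multiplier of $\mathcal{H}(\phi)$.

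So fix a polynomial $f$. Since $\psi\in H^{2}(\DD^{2})$ and $f\in H^{\infty}(\DD^{2})$ we have $\psi f\in H^{2}(\DD^{2})$ at once. By Lemma~\ref{6} applied to $f$ there is $g\in H^{2}$ with $T_{\ol{\phi}}f=T_{\ba}g$, and the de Branges--Rovnyak norm identity for sub-Hardy Hilbert spaces (cf.\ \cite{abds}, \cite{sar94}) gives, for the minimal such $g$, $\|f\|_{\mathcal{H}(\phi)}^{2}=\|f\|_{H^{2}}^{2}+\|g\|_{H^{2}}^{2}$; in particular $\|f\|_{H^{2}}\le\|f\|_{\mathcal{H}(\phi)}$ and $\|g\|_{H^{2}}\le\|f\|_{\mathcal{H}(\phi)}$. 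Again by Lemma~\ref{6}, $\psi f\in\mathcal{H}(\phi)$ will follow once we exhibit some $g'\in H^{2}$ with $T_{\ol{\phi}}(\psi f)=T_{\ba}g'$, and then $\|\psi f\|_{\mathcal{H}(\phi)}^{2}\le\|\psi f\|_{H^{2}}^{2}+\|g'\|_{H^{2}}^{2}$. Thus the entire matter reduces to two estimates: $\|\psi f\|_{H^{2}}\le C\|f\|_{\mathcal{H}(\phi)}$, and the existence of $g'$ with $T_{\ol{\phi}}(\psi f)=T_{\ba}g'$ and $\|g'\|_{H^{2}}\le C\|f\|_{\mathcal{H}(\phi)}$.

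The core of the proof is an algebraic computation with Toeplitz and Hankel operators. From the boundary identity $\psi=\ba h-H_{\ba}h$, the vanishing $H_{a}=H_{\phi}=0$ (since $a,\phi\in H^{\infty}(\DD^{2})$, so also $M_{\ba}$ and $M_{\ol{\phi}}$ map $L^{2}\ominus H^{2}$ into itself), the factorization identity~\eqref{han}, the backward-shift formula~\eqref{bs}, and---crucially---both the Pythagorean relation $\ba a=1-\ol{\phi}\phi$ on $\TT^{2}$ and the relation $T_{\ol{\phi}}f=T_{\ba}g$, one rewrites $\psi f$ and $T_{\ol{\phi}}(\psi f)$. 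The two hypothesized operators enter through the identities
$$H_{\bh}^{*}H_{\ba}\,r=T_{\ba}(hr)-h\,T_{\ba}r,\qquad H_{\bh}^{*}H_{\ol{\phi}}\,r=T_{\ol{\phi}}(hr)-h\,T_{\ol{\phi}}r\qquad(r\in H^{2}),$$
which let one move the factor $h$ past $T_{\ba}$ and $T_{\ol{\phi}}$ up to a bounded error. After the cancellations forced by $\ba a+\ol{\phi}\phi=1$, both $\psi f$ and a valid $g'$ come out as finite sums of $T_{\ba}$ applied to terms of the form $\mu\cdot(\text{fixed element})$ with $\mu\in\{h,\psi\}$, and of $H_{\bh}^{*}H_{\ba}$ and $H_{\bh}^{*}H_{\ol{\phi}}$ applied to $f$ or $g$. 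Estimating each summand by one of $\|H_{\bh}^{*}H_{\ba}\|$, $\|H_{\bh}^{*}H_{\ol{\phi}}\|$, $\|h\|_{H^{2}}$, $\|\psi\|_{H^{2}}$ times $\|f\|_{H^{2}}$ or $\|g\|_{H^{2}}$, and using $\|f\|_{H^{2}},\|g\|_{H^{2}}\le\|f\|_{\mathcal{H}(\phi)}$, yields the two required estimates. It is essential here that $f$ be traded, via $T_{\ol{\phi}}f=T_{\ba}g$, for the datum $g$ with $\|g\|_{H^{2}}\le\|f\|_{\mathcal{H}(\phi)}$; without this one only reaches the useless bound involving $\|f\|_{\infty}$. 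This is the scheme of \cite{lotsar93}*{Theorem~2}, with $\TT$ replaced by $\TT^{2}$.

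The step I expect to be genuinely delicate is making the above algebra rigorous: the manipulations involve products of two $H^{2}(\DD^{2})$ functions, which in general lie only in $H^{1}(\DD^{2})$, so every use of the Riesz projection $P$ and of the rule ``$P(hw)=H_{\bh}^{*}w$ for $w\in L^{2}\ominus H^{2}$'' must be justified. The clean way around this is to carry out the whole computation first with $h$ replaced by a dilate $h_{\rho}(z)=h(\rho z)$, $0<\rho<1$ (so $h_{\rho}\in H^{\infty}(\DD^{2})$ and every step is legitimate), keeping $f$ a fixed polynomial, and then to let $\rho\uparrow 1$, using that $h_{\rho}\to h$ in $H^{2}$ and that the Toeplitz and Hankel operators involved converge in the appropriate topologies. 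The one point that requires care is that the hypothesized bounds pass to the limit, i.e.\ $\sup_{\rho<1}\|H_{\overline{h_{\rho}}}^{*}H_{\ba}\|<\infty$ and $\sup_{\rho<1}\|H_{\overline{h_{\rho}}}^{*}H_{\ol{\phi}}\|<\infty$; this follows from the boundedness of $H_{\bh}^{*}H_{\ba}$ and $H_{\bh}^{*}H_{\ol{\phi}}$ together with the contractivity of dilation on $H^{2}(\DD^{2})$ and its compatibility with $P$. (In the one-variable setting all of this is handled in \cite{lotsar93}.) Once the limit is in place, the estimate proved for polynomials is precisely the inequality $\|\psi f\|_{\mathcal{H}(\phi)}\le C\|f\|_{\mathcal{H}(\phi)}$ needed in the first paragraph, which completes the proof.
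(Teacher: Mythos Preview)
Your approach is substantially more elaborate than the paper's and leans on several facts that are not established in the bidisk setting. The paper works directly with an arbitrary $f\in\mathcal{H}(\phi)$ and, via Lemma~\ref{6}, only needs to show $T_{\ol\phi}(\psi f)\in\mathcal{M}(\ba)$; boundedness of $M_\psi$ then comes for free from the closed graph theorem, so no quantitative estimate, no density of polynomials, and no norm identity are needed. The single trick you are missing is the operator identity $H^{*}_{\bar\psi}=T_{\ba}H^{*}_{\bh}$ (proved in one line from $\psi=T_{\ba}h$), which makes every Hankel-adjoint term automatically land in $\mathcal{M}(\ba)$. With this in hand the computation
\[
T_{\ol\phi}(\psi f)=T_{\ba}\bigl(\psi f^{+}-H^{*}_{\bh}H_{\ba}f^{+}+H^{*}_{\bh}H_{\ol\phi}f\bigr)
\]
is immediate from \eqref{han}, and the two hypothesized bounded operators put the bracketed expression in $H^{2}$.

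By contrast, three of your supporting claims are genuinely problematic. First, density of polynomials in $\mathcal{H}(\phi)$ on $\DD^{2}$ is not in the paper and is not obvious; the one-variable argument uses that the outer mate has cyclic backward-shift orbit, which has no clean bidisk analogue. Second, the norm identity $\|f\|_{\mathcal{H}(\phi)}^{2}=\|f\|_{H^{2}}^{2}+\|g\|_{H^{2}}^{2}$ you invoke is a one-variable fact (Sarason); on $\DD^{2}$ one has $T_{a}T_{\ba}=I-T_{\ol\phi}T_{\phi}-H_{\ba}^{*}H_{\ba}$, so $\mathcal{M}(\ba)$ is in general strictly smaller than $\mathcal{H}(\ol\phi)$ and the identity need not hold as stated. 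Third, and most concretely, the assertion that $\sup_{\rho<1}\|H^{*}_{\overline{h_\rho}}H_{\ba}\|<\infty$ ``follows from contractivity of dilation and its compatibility with $P$'' is not justified: dilation does not intertwine with Hankel operators in any way that controls such products, and boundedness of $H^{*}_{\bh}H_{\ba}$ says nothing about $H^{*}_{\overline{h_\rho}}H_{\ba}$ for $\rho<1$. Your rigor concern about products of $H^{2}$ functions is legitimate (and, to be fair, the paper's display above also needs $\psi f$ and $\psi f^{+}$ to be in $H^{2}$, which is only clear when $h\in H^{\infty}$), but the dilation workaround you propose does not obviously close that gap.
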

\begin{proof}
Let $f\in \cH(\phi)$. By Lemma \ref{6}, there exists a function $f^{+}\in H^2(\DD^2)$ such that $T_{\bar{\phi}}f=T_{\ba}f^{+}$, and it is sufficient to show $T_{\bar{\phi}}(\psi f)\in \cM(\ba)$.
Notice that for any $g\in [H^2(\DD^2)]^\perp$,
$$
H^*_{\bar{\psi}}g=P(\psi g)=P(g\cdot T_{\ba}h)=P(\ba gh)=T_{\ba}(gh)=T_{\ba}H^*_{\bh}g.
$$
The third equality holds because $T_g T_{\ba}=T_{g\ba}$.
Thus $H^*_{\bar{\psi}}=T_{\ba}H^*_{\bh}$. This together with \eqref{han} gives
\begin{align*}
T_{\bar{\phi}}(\psi f)&=T_{\psi\bar{\phi}}(f)=T_{\psi}T_{\bar{\phi}}f+H^*_{\bar{\psi}}H_{\bar{\phi}}f\\
&=T_{\psi}T_{\ba}f^{+}+H^*_{\bar{\psi}}H_{\bar{\phi}}f\\
&=T_{\psi\ba}f^{+}-H^*_{\bar{\psi}}H_{\ba}f^{+}+H^*_{\bar{\psi}}H_{\bar{\phi}}f\\
&=T_{\ba}(\psi f^{+})-T_{\ba}H^*_{\bh}H_{\ba}f^{+}+T_{\ba}H^*_{\bh}H_{\bar{\phi}}f.
\end{align*}
Since $H^*_{\bh} H_{\ba}$ and $H^*_{\bh} H_{\ol{\phi}}$ are bounded on $H^2(\DD^2)$, $H^*_{\bh} H_{\ba} f^{+}$ and $H^*_{\bh} H_{\ol{\phi}}f$ are in $H^2(\DD^2)$, which implies $T_{\bar{\phi}}(\psi f)\in \cM(\ba)$.
\end{proof}

Since $a$ and $\phi$ are bounded functions, $H^*_{\bh} H_{\ba}$ and $H^*_{\bh} H_{\ol{\phi}}$ are bounded if $H_{\bh}$ is bounded.
The converse is not true and it is an open problem (even for the one-variable case) to characterize the boundedness of the product of two Hankel operators.
If $\psi=T_{\ba}h$ for some function $h\in H^\infty(\DD^2)$, then $H^*_{\bh} H_{\ba}$ and $H^*_{\bh} H_{\ol{\phi}}$ are bounded.

\begin{thm}\label{pol}
Suppose $\phi$ is in $H^\infty(\DD^2)$ and has a Pythagorean mate $a$. Then every polynomial is a multiplier of $\cH(\phi)$.
\end{thm}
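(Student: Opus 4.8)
The plan is to reduce to the two coordinate functions and then invoke Theorem \ref{con}. Recall first the elementary fact that the multipliers of $\mathcal{H}(\phi)$ form an algebra containing the constants: if $\psi_1,\psi_2$ are multipliers and $f\in\mathcal{H}(\phi)$, then $cf$, $\psi_1f+\psi_2f$ and $\psi_1(\psi_2f)$ all lie in $\mathcal{H}(\phi)$. Since $\CC[z_1,z_2]$ is generated as a unital algebra by $z_1$ and $z_2$, it therefore suffices to show that the coordinate functions $z_1$ and $z_2$ are multipliers of $\mathcal{H}(\phi)$.

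By Theorem \ref{con} and the remark following it, it is enough to write $z_1$ (and, symmetrically, $z_2$) in the form $T_{\overline a}h$ with $h\in H^\infty(\DD^2)$: boundedness of $h$ makes $H^*_{\overline h}H_{\overline a}$ and $H^*_{\overline h}H_{\overline\phi}$ bounded, which are the hypotheses of Theorem \ref{con}. I would construct $h$ as follows. Expand $a=\sum_{l\ge 0}A_l(z_1)z_2^l$ and let $s$ be the least index with $A_s\not\equiv 0$ (finite since $a\not\equiv 0$). Then $a=z_2^s\hat a$ with $\hat a=\sum_{l\ge 0}A_{l+s}(z_1)z_2^l$; the quotient $\hat a=a/z_2^s$ is holomorphic on $\DD^2$ and, by the maximum modulus principle applied slicewise in $z_2$, satisfies $\|\hat a\|_\infty\le\|a\|_\infty$, so $\hat a\in H^\infty(\DD^2)$, while $\hat a_0(z_1):=\hat a(z_1,0)=A_s$ is a nonzero element of $H^\infty(\DD)$. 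For any $g\in\CC[z_1]$ one then computes, working on $\TT^2$ where $|z_2|=1$ and using that $P$ annihilates every term carrying a negative power of $z_2$,
$$
T_{\overline a}\bigl(z_2^s g(z_1)\bigr)=P\bigl(\overline{z_2^s\hat a}\cdot z_2^s g(z_1)\bigr)=P\bigl(\overline{\hat a}\,g(z_1)\bigr)=T^{(1)}_{\overline{\hat a_0}}g,
$$
where $T^{(1)}_{\overline{\hat a_0}}$ denotes the one-variable Toeplitz operator on $H^2(\DD)$ with symbol $\overline{\hat a_0}$.

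So it remains to find $g\in\CC[z_1]$ with $T^{(1)}_{\overline{\hat a_0}}g=z_1$. Factoring $\hat a_0=z_1^r a_1$ with $a_1(0)\neq 0$, one has $T^{(1)}_{\overline{\hat a_0}}=T^{(1)}_{\bar z_1^{\,r}}T^{(1)}_{\overline{a_1}}$; here $T^{(1)}_{\overline{a_1}}$ maps $\CC[z_1]$ into itself, is triangular on the monomial basis with nonvanishing diagonal entry $\overline{a_1(0)}$, hence is a bijection of $\CC[z_1]$, and $T^{(1)}_{\bar z_1^{\,r}}$ maps $\CC[z_1]$ onto itself (since $z_1^j=T^{(1)}_{\bar z_1^{\,r}}z_1^{j+r}$). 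Thus $T^{(1)}_{\overline{\hat a_0}}$ maps $\CC[z_1]$ onto itself, and one may take $g=(T^{(1)}_{\overline{a_1}})^{-1}z_1^{1+r}$. Then $h:=z_2^s g(z_1)$ is a polynomial, in particular bounded, and $z_1=T_{\overline a}h$, so Theorem \ref{con} yields that $z_1$ is a multiplier of $\mathcal{H}(\phi)$. Interchanging $z_1$ and $z_2$ gives the same for $z_2$, and the first paragraph completes the proof.

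The delicate point is the construction of $h$ when $a(0)=0$. One cannot simply invert $T_{\overline a}$ on a fixed finite-dimensional space of polynomials — that restriction is nilpotent as soon as $a(0)=0$ — so the polynomial $z_1$ must be produced by applying $T_{\overline a}$ to a polynomial of strictly larger degree, and the factorization $\hat a_0=z_1^r a_1$ is exactly what localizes this. The only other technical item is the boundedness of $a/z_2^s$, which is routine but should be recorded. (Alternatively, the one-variable input can be stated as the classical fact, in the spirit of Lotto--Sarason, that the coordinate function multiplies $\mathcal{M}(\overline b)$ for every $b\in H^\infty(\DD)$, $b\not\equiv 0$.)
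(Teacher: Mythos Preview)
Your proof is correct and takes a genuinely different route from the paper's. The paper argues directly that $T_{\bar a}$ carries polynomials onto polynomials: if the lowest total degree appearing in the Fourier expansion of $a$ is $n$, it asserts that $T_{\bar a}$ maps polynomials of degree $n+N$ to polynomials of degree $N$, and concludes that every polynomial $\psi$ equals $T_{\bar a}h$ for some polynomial $h$; Theorem~\ref{con} then applies to $\psi$ itself. Your argument instead reduces to the two generators $z_1,z_2$ via the (standard) fact that the multipliers of $\mathcal H(\phi)$ form a unital algebra, and then produces an explicit polynomial preimage of $z_1$ under $T_{\bar a}$ by stripping a power of $z_2$ from $a$ and solving a one-variable co-analytic Toeplitz equation on $\CC[z_1]$. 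The triangularity argument for $T^{(1)}_{\overline{a_1}}$ and the computation $T_{\bar a}(z_2^s g(z_1))=T^{(1)}_{\overline{A_s}}g$ are both correct as written.

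What each approach buys: the paper's line is shorter but the surjectivity step is stated rather than proved --- ``maps degree $n+N$ to degree $N$'' only gives the \emph{into} direction, and when $a(0)=0$ one cannot simply invert $T_{\bar a}$ degree-by-degree; some additional argument (e.g., reducing to the homogeneous leading part of $a$ and analyzing the resulting linear map between spaces of homogeneous polynomials) is needed. Your reduction to $z_1,z_2$ sidesteps exactly this gap: you only need two preimages, and your one-variable factorization $\hat a_0=z_1^r a_1$ handles the vanishing at the origin cleanly. On the other hand, the paper's route, once completed, yields the slightly stronger conclusion that every polynomial lies in $T_{\bar a}(\CC[z_1,z_2])$, which your argument does not directly give.
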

\begin{proof}
Consider the Fourier expansion of $a$. If the lowest degree in the Fourier expansion of $a$ is $n$, then $T_{\ba}$ maps polynomials of degree $n+N$ to polynomials of degree $N$.
Thus if $\psi$ is a polynomial, then $\psi=T_{\ba} h$ for some polynomial $h$. By Theorem \ref{con},  $\psi$ is a multiplier of $\cH(\phi)$.
\end{proof}

As a consequence of the above result, we present the following theorem regarding bounded composition operators.
\begin{thm}\label{com}
Suppose $\phi$ is in $H^\infty(\DD^2)$ and has a Pythagorean mate. Then for every non-zero polynomial $\psi$ bounded by $1$ on $\DD^2$, there exists a positive constant $k\leq \frac{1}{||\psi||_\infty}$ such that for every $c\in [0,k]$, the composition operator $C_B$ with $B=(\phi, c\psi)$ is a bounded on $H^2(\DD^2)$ and on $L^2_a(\DD^2)$.
\end{thm}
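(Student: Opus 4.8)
The plan is to reduce the statement to Theorem \ref{pol} and Corollary \ref{cor} by a simple scaling argument, so that all the real work has already been done. First I would invoke Theorem \ref{pol}: since $\phi$ has a Pythagorean mate, the nonzero polynomial $\psi$ is a multiplier of $\mathcal{H}(\phi)$. Write $M_\psi$ for the associated multiplication operator on $\mathcal{H}(\phi)$. It is bounded (boundedness is automatic for multipliers of a reproducing kernel Hilbert space, by the closed graph theorem), and it is nonzero because $\mathcal{H}(\phi)$ contains the constants (by Lemma \ref{5}, $\mathcal{M}(\ba)\subset\mathcal{H}(\phi)$ and $\mathcal{M}(\ba)$ contains the constants) and $M_\psi 1=\psi\neq 0$. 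Hence $0<||M_\psi||<\infty$.

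Next I would set $k=1/||M_\psi||$. By \eqref{norm} applied in $\mathcal{H}(\phi)$, $||M_\psi||\geq \sup_{z\in\DD^2}|\psi(z)|=||\psi||_\infty$, so indeed $0<k\leq 1/||\psi||_\infty$, which is the asserted bound on $k$. Now fix $c\in[0,k]$. Then $c\psi$ is again a multiplier of $\mathcal{H}(\phi)$ with $||M_{c\psi}||=c\,||M_\psi||\leq k\,||M_\psi||=1$, so $c\psi$ is a contractive multiplier of $\mathcal{H}(\phi)$; and $||c\psi||_\infty=c\,||\psi||_\infty\leq k\,||\psi||_\infty\leq 1$, so $c\psi$ is bounded by $1$ on $\DD^2$. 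Also $||\phi||_\infty\leq 1$ automatically, since $|a|^2+|\phi|^2=1$ a.e. on $\TT^2$. Thus $B=(\phi,c\psi)$ satisfies the hypotheses of Corollary \ref{cor}, which yields that $C_B$ is bounded on $H^2(\DD^2)$ (with the stated kind of norm bound coming along for free).

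For boundedness on $L^2_a(\DD^2)$ I would invoke the Remark following Theorem \ref{M}: the hypothesis of Corollary \ref{cor} is precisely that the function $R$ associated with $B$ is a positive kernel (via Theorem \ref{m}), and then $R^2$ is a positive kernel by Proposition \ref{prop}(2); this is exactly the sufficient condition for $C_B$ to be bounded on $L^\Ga_a(\DD^2)$ with $\Ga=2$, i.e. on the Bergman space $L^2_a(\DD^2)$.

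I do not expect a genuine obstacle: the content of the theorem lives entirely inside Theorem \ref{pol} (hence inside Theorem \ref{con} and the Pythagorean-mate machinery). The only points requiring care are checking that $k$ is strictly positive (equivalently, that $||M_\psi||<\infty$, which is the closed graph theorem), and observing that the single constraint $c\leq k$ simultaneously delivers both contractivity of $c\psi$ as a multiplier of $\mathcal{H}(\phi)$ and the sup-norm bound $||c\psi||_\infty\leq 1$ — the latter being guaranteed precisely by the general inequality \eqref{norm}, which forces $1/||M_\psi||\leq 1/||\psi||_\infty$.
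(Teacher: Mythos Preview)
Your proposal is correct and follows essentially the same route as the paper: invoke Theorem \ref{pol}, set $k=1/\|M_\psi\|$, use \eqref{norm} to bound $k$, and apply Corollary \ref{cor}. You have simply filled in a few details the paper leaves implicit, namely the positivity of $k$ and the explicit appeal to the Remark after Theorem \ref{M} for the Bergman space $L^2_a(\DD^2)$.
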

\begin{proof}
By Theorem \ref{pol}, $\psi$ is a multiplier of $\cH(\phi)$. Choose $k=\frac{1}{||M_{\psi}||}$, and $k\leq \frac{1}{||\psi||_\infty}$ by \eqref{norm}. The conclusion follows from Corollary \ref{cor}.
\end{proof}

\section{Examples}
In this section we present some concrete examples. The first example is $$\phi(z)=\frac{1+z_1}{2}$$ with Pythagorean mate $$a(z)=\frac{1-z_1}{2}.$$
We know from Theorem \ref{pol} that every polynomial is a multiplier of $\cH(\phi)$. In fact we can find more multipliers other than the polynomials.
\begin{thm}\label{ex1}
Let $\phi(z)=\frac{1+z_1}{2}$ and $a(z)=\frac{1-z_1}{2}$. Then $f\in\cM(\ba)$ if and only if there exist a function $f_1\in H^2(\DD^2)$ and a function $f_2\in H^2(\DD)$ such that
\beq\label{ex}
f(z)=(z_1-1)f_1(z)+f_2(z_2).
\eeq
If $f$ has the form \eqref{ex} and the function
\beq\label{ex'}g(z)=z_1f_1(z)+f_2(z_2)\eeq is bounded, then $f$ is a multiplier of $\cH(\phi)$.
\end{thm}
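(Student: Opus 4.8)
The plan is to compute $T_{\ba}$ explicitly using the backward shift formula \eqref{bs}, read off the description of $\cM(\ba)$ from it, and then observe that the natural preimage of $f$ under $T_{\ba}$ is, up to a factor of $2$, exactly the function $g$ of \eqref{ex'} --- so that the hypothesis ``$g$ bounded'' is precisely the $H^\infty$-symbol case already covered by Theorem \ref{con}.

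First I would write $\ba(z)=\tfrac12(1-\bz_1)$, so that for $h\in H^2(\DD^2)$,
$$T_{\ba}h=P\!\left(\tfrac12(1-\bz_1)h\right)=\tfrac12\bigl(h-T_{\bz_1}h\bigr).$$
Formula \eqref{bs} gives $z_1T_{\bz_1}h=h-h\circ P_1$, i.e. $h=h\circ P_1+z_1T_{\bz_1}h$, where $h\circ P_1=h(0,z_2)$ is a one-variable function in $z_2$ and $T_{\bz_1}h\in H^2(\DD^2)$. Substituting yields $T_{\ba}h=\tfrac12(h\circ P_1)+\tfrac{z_1-1}{2}T_{\bz_1}h$, which is of the form \eqref{ex} with $f_1=\tfrac12T_{\bz_1}h$ and $f_2=\tfrac12(h\circ P_1)$. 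Conversely, given $f=(z_1-1)f_1+f_2(z_2)$ with $f_1\in H^2(\DD^2)$, $f_2\in H^2(\DD)$, I set $h:=2z_1f_1+2f_2\in H^2(\DD^2)$; since $T_{\bz_1}$ annihilates $f_2(z_2)$ and sends $z_1f_1$ to $f_1$, one computes $T_{\bz_1}h=2f_1$ and hence $T_{\ba}h=\tfrac12(h-2f_1)=z_1f_1+f_2-f_1=f$, so $f\in\cM(\ba)$. This proves the first (biconditional) assertion. I would also note that the representation \eqref{ex} is unique --- two representations would differ by $(z_1-1)^{-1}$ times a function of $z_2$, which cannot lie in $H^2(\DD^2)$ unless it is zero, since $1/|z_1-1|^2$ is not integrable on $\TT$ --- so that $g$ in \eqref{ex'} is unambiguously defined.

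For the multiplier statement, the key point is that the function $h=2z_1f_1+2f_2$ constructed above equals $2g$. Thus if $g$ is bounded, then $h\in H^\infty(\DD^2)$, so multiplication by $\bh$ is bounded on $L^2(\TT^2)$, making $H_{\bh}$ bounded and hence $H^*_{\bh}H_{\ba}$ and $H^*_{\bh}H_{\ol{\phi}}$ bounded on $H^2(\DD^2)$ (since $\ba,\ol{\phi}\in L^\infty$; this is exactly the remark following Theorem \ref{con}). As $\phi=\tfrac{1+z_1}{2}$ has Pythagorean mate $a=\tfrac{1-z_1}{2}$ and $f=T_{\ba}h$, Theorem \ref{con} applies and gives that $f$ is a multiplier of $\cH(\phi)$.

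Essentially everything here is formal manipulation of Toeplitz and backward-shift identities; the only step requiring a genuine (if brief) argument is the non-integrability remark securing uniqueness of \eqref{ex}. The real substance is spotting the identity $h=2g$: it is what aligns the boundedness hypothesis on $g$ with the hypothesis of Theorem \ref{con} for an $H^\infty$ symbol, so that no new operator estimates are needed. As a sanity check, when $f_1,f_2$ are polynomials the function $g$ is a polynomial, hence bounded, and one recovers the polynomial case of Theorem \ref{pol}.
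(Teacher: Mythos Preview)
Your argument is correct and follows essentially the same route as the paper: compute $T_{\ba}h=\tfrac12(h-T_{\bz_1}h)$, use the backward shift identity \eqref{bs} to rewrite $h=z_1T_{\bz_1}h+h\circ P_1$, and for the converse take $h=2z_1f_1+2f_2=2g$ and invoke Theorem~\ref{con} with $h\in H^\infty$. The only addition on your side is the uniqueness remark for the decomposition \eqref{ex}, which the paper does not make explicit but which is indeed needed for $g$ in \eqref{ex'} to be well defined.
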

\begin{proof}
Suppose $f\in\cM(\ba)$. Then $f=T_{\ba}h={1\over 2}(h-T_{\bz_1}h)$, for some $h\in H^2(\DD^2)$. By formula \eqref{bs}, we have $$h=z_1 T_{\bz_1}h + h\circ P_1,$$ and then
$$
f={1\over 2}(z_1 T_{\bz_1}h + h\circ P_1-T_{\bz_1}h)=(z_1-1)({1\over 2}\cdot T_{\bz_1}h)+ {1\over 2}(h\circ P_1).
$$
Notice that $h\circ P_1$ is a one variable function in $z_2$, so we get the desired form \eqref{ex}.

Conversely, if $f$ has the form \eqref{ex3}, then $h$ is determined by $$h(z)=2z_1f_1(z)+2f_2(z_2),$$ and $f=T_{\ba}h$.
By Theorem \ref{con}, we see that functions $f$ having the form \eqref{ex} with $||g||_\infty<\infty$ are multipliers of $\cH(\phi)$ as well.
\end{proof}

A similar argument can be applied to $\phi(z)=\frac{1+z_1z_2}{2}$, where the Pythagorean mate of $\phi$ is $a(z)=\frac{1-z_1z_2}{2}$. We can obtain a result similar to Theorem \ref{ex1}.
\begin{thm}\label{ex2}
Let $\phi(z)=\frac{1+z_1z_2}{2}$ and $a(z)=\frac{1-z_1z_2}{2}$. Then $f\in\cM(\ba)$ if and only if there exist functions $f_1, f_2\in H^2(\DD)$ and a function $f_3\in H^2(\DD^2)$ such that
\beq\label{ex2}
f(z)=f_1(z_1)+f_2(z_2)+(z_1z_2-1)f_3(z).
\eeq
If $f$ has the form \eqref{ex2} and the function \beq\label{ex2'}g(z)=f_1(z_1)+f_2(z_2)+z_1z_2f_3(z)\eeq is bounded, then $f$ is a multiplier of $\cH(\phi)$.
\end{thm}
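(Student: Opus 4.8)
The plan is to transcribe the proof of Theorem~\ref{ex1}, replacing the backward shift $T_{\bz_1}$ by the iterated backward shift $T_{\overline{z_1z_2}}=T_{\bz_1}T_{\bz_2}$ (these agree since $T_{z_1z_2}=M_{z_1}M_{z_2}$ on $H^2(\DD^2)$, so their adjoints coincide). First one checks that $a$ really is a Pythagorean mate of $\phi$: on $\TT^2$, $|a|^2+|\phi|^2=\tfrac14(|1-z_1z_2|^2+|1+z_1z_2|^2)=\tfrac14(2+2|z_1z_2|^2)=1$. Since $\ba=\tfrac12(1-\bz_1\bz_2)$, we have $T_{\ba}=\tfrac12(I-T_{\bz_1}T_{\bz_2})$. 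Applying the backward-shift formula \eqref{bs} first in $z_2$ and then in $z_1$ to $T_{\bz_2}h$ gives the basic identity
$$
h \;=\; z_1z_2\,T_{\bz_1}T_{\bz_2}h \;+\; h\circ P_1 \;+\; h\circ P_2 \;-\; h(0,0), \qquad h\in H^2(\DD^2),
$$
in which $h\circ P_1$ is a function of $z_2$ alone, $h\circ P_2$ is a function of $z_1$ alone, and $h(0,0)$ is a constant.

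For the forward direction, let $f\in\cM(\ba)$, so $f=T_{\ba}h=\tfrac12(h-T_{\bz_1}T_{\bz_2}h)$ for some $h\in H^2(\DD^2)$; substituting the identity above for the first $h$ gives
$$
f \;=\; \tfrac12(z_1z_2-1)\,T_{\bz_1}T_{\bz_2}h \;+\; \tfrac12\,h\circ P_2 \;+\; \tfrac12\,h\circ P_1 \;-\; \tfrac12\,h(0,0).
$$
Taking $f_1(z_1)=\tfrac12\bigl(h(z_1,0)-h(0,0)\bigr)$, $f_2(z_2)=\tfrac12\,h(0,z_2)$ and $f_3=\tfrac12\,T_{\bz_1}T_{\bz_2}h\in H^2(\DD^2)$ puts $f$ in the form \eqref{ex2}, with $f_1,f_2\in H^2(\DD)$ since they are one-variable slices of $h$.

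For the converse, given $f$ of the form \eqref{ex2} I put $h=2z_1z_2 f_3+2f_1(z_1)+2f_2(z_2)\in H^2(\DD^2)$ and verify $T_{\ba}h=f$ directly from \eqref{bs}: $h\circ P_2=2f_1(z_1)+2f_2(0)$, so $T_{\bz_2}h=2z_1 f_3+2T_{\bz_2}f_2$, hence $T_{\bz_1}T_{\bz_2}h=2f_3$ because $T_{\bz_2}f_2$ depends only on $z_2$ and is annihilated by $T_{\bz_1}$; therefore $T_{\ba}h=\tfrac12(h-2f_3)=f_1(z_1)+f_2(z_2)+(z_1z_2-1)f_3=f$, so $f\in\cM(\ba)$. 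Finally, if the function $g$ in \eqref{ex2'} is bounded then $h=2g\in H^\infty(\DD^2)$, so $H^*_{\bh}H_{\ba}$ and $H^*_{\bh}H_{\ol{\phi}}$ are bounded (as noted after Theorem~\ref{con}), and Theorem~\ref{con} shows $f=T_{\ba}h$ is a multiplier of $\cH(\phi)$.

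There is no serious obstacle: the argument is a routine adaptation of Theorem~\ref{ex1}. The only points that need a little attention are the bookkeeping in deriving and substituting the ``double'' difference identity for $h$ (in particular tracking the pure constant $h(0,0)$ so that $f_1$ and $f_2$ come out as genuine one-variable functions), the identification $T_{\bz_1}T_{\bz_2}=T_{\overline{z_1z_2}}$, and the harmless non-uniqueness of the decomposition \eqref{ex2}, which the statement sidesteps by claiming only existence.
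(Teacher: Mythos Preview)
Your proof is correct and follows exactly the approach the paper intends: the paper's own proof just records the identity $h=z_1z_2\,T_{\bz_1\bz_2}h+h\circ P_1+h\circ P_2-h(0,0)$ and then says ``the rest of the proof is similar to that in Theorem~\ref{ex1},'' which is precisely what you have written out in full.
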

\begin{proof}
For every $h\in H^2(\DD^2)$, notice that
$$
h=z_1z_2 T_{\bz_1\bz_2}h + h\circ P_1 +h\circ P_2-h(0,0).
$$
The rest of the proof is similar to that in Theorem \ref{ex1}.
\end{proof}

In the next example we let $\phi(z)=\frac{z_1+z_2}{2}$. Then $\phi$ has a Pythagorean mate $a(z)=\frac{z_1-z_2}{2}$, which vanishes at $0$.
\begin{thm}
Let $\phi(z)=\frac{z_1+z_2}{2}$ and $a(z)=\frac{z_1-z_2}{2}$. Then $f\in\cM(\ba)$ if and only if there exist functions $f_1, f_2\in H^2(\DD)$ and a function $f_3\in H^2(\DD^2)$ such that
\beq\label{ex3}
f(z)=f_1(z_1)+f_2(z_2)+(z_1-z_2)f_3(z).
\eeq
If $f$ has the form \eqref{ex3} and the function \beq\label{ex3'}g(z)=z_1f_1(z_1)-z_2f_2(z_2)+z_1z_2f_3(z)\eeq is bounded, then $f$ is a multiplier of $\cH(\phi)$.
\end{thm}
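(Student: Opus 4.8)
The plan is to follow the template of Theorems~\ref{ex1} and~\ref{ex2}. Two facts drive everything: $\cM(\ba)=T_{\ba}H^2(\DD^2)$, and the decomposition already used in the proof of Theorem~\ref{ex2},
$$h=z_1z_2\,T_{\bz_1\bz_2}h+h\circ P_1+h\circ P_2-h(0,0),\qquad h\in H^2(\DD^2),$$
which splits $h$ into a part divisible by $z_1z_2$, a function of $z_2$ alone, a function of $z_1$ alone, and a constant. Since $a(z)=\tfrac{z_1-z_2}{2}$, one has $T_{\ba}=\tfrac12(T_{\bz_1}-T_{\bz_2})$, so the argument reduces to feeding this decomposition into \eqref{bs}.

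For the membership characterization, suppose first $f\in\cM(\ba)$, so $f=T_{\ba}h$ for some $h\in H^2(\DD^2)$. By \eqref{bs} and the decomposition above,
$$T_{\bz_1}h=z_2\,T_{\bz_1\bz_2}h+\frac{h(z_1,0)-h(0,0)}{z_1},\qquad T_{\bz_2}h=z_1\,T_{\bz_1\bz_2}h+\frac{h(0,z_2)-h(0,0)}{z_2},$$
the trailing summands being a function of $z_1$ in $H^2(\DD)$ and a function of $z_2$ in $H^2(\DD)$, respectively. Subtracting, the two terms carrying $T_{\bz_1\bz_2}h$ merge into $(z_2-z_1)T_{\bz_1\bz_2}h$, and dividing by $2$ puts $f=T_{\ba}h$ in the form \eqref{ex3} with $f_1(z_1)=\tfrac12\,\tfrac{h(z_1,0)-h(0,0)}{z_1}$, $f_2(z_2)=-\tfrac12\,\tfrac{h(0,z_2)-h(0,0)}{z_2}$, $f_3=-\tfrac12\,T_{\bz_1\bz_2}h$. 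Conversely, given $f$ of the form \eqref{ex3}, I would put $h(z)=2z_1f_1(z_1)-2z_2f_2(z_2)-2z_1z_2f_3(z)\in H^2(\DD^2)$; then $h(z_1,0)=2z_1f_1(z_1)$, $h(0,z_2)=-2z_2f_2(z_2)$, $h(0,0)=0$, so \eqref{bs} gives $T_{\bz_1}h=2f_1(z_1)-2z_2f_3$ and $T_{\bz_2}h=-2f_2(z_2)-2z_1f_3$, whence $T_{\ba}h=\tfrac12(T_{\bz_1}h-T_{\bz_2}h)=f_1(z_1)+f_2(z_2)+(z_1-z_2)f_3=f$, so $f\in\cM(\ba)$.

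For the multiplier statement I would invoke Theorem~\ref{con} with $h=2z_1f_1(z_1)-2z_2f_2(z_2)-2z_1z_2f_3(z)$ from the previous step: since $\psi=f=T_{\ba}h$, it suffices that $H^*_{\bh}H_{\ba}$ and $H^*_{\bh}H_{\ol\phi}$ be bounded, and by the remark following Theorem~\ref{con} this holds once $h\in H^\infty(\DD^2)$. The key observation is that boundedness of $g$ in \eqref{ex3'} already forces this: restricting \eqref{ex3'} to the coordinate slices gives
$$g(z_1,0)=z_1f_1(z_1),\qquad g(0,z_2)=-z_2f_2(z_2),\qquad g(0,0)=0,$$
so $z_1f_1(z_1)$ and $z_2f_2(z_2)$ lie in $H^\infty(\DD)$, and $z_1z_2f_3(z)=g(z)-g(z_1,0)-g(0,z_2)+g(0,0)$ is a difference of bounded functions, hence in $H^\infty(\DD^2)$; therefore $h=4g(z_1,0)+4g(0,z_2)-2g\in H^\infty(\DD^2)$. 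Theorem~\ref{con} then gives that $f$ is a multiplier of $\cH(\phi)$.

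The algebra of evaluating $T_{\ba}h$ in terms of the slice data of $h$ is routine, the only care being with signs (here $a$ is a \emph{difference}, in contrast to the sum $z_1+z_2$ defining $\phi$). The one genuinely substantive point is the last paragraph's observation that the single hypothesis ``$g$ is bounded'' already packages the three separate boundedness facts for $z_1f_1$, $z_2f_2$, and $z_1z_2f_3$, because each of these is a coordinate restriction of $g$ (or its mixed second difference); this is exactly what upgrades $h$ to $H^\infty(\DD^2)$ and makes Theorem~\ref{con} applicable.
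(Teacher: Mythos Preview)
Your proof is correct and follows the paper's approach: decompose $h$ as a constant plus $z_1$-only, $z_2$-only, and $z_1z_2$-divisible parts, compute $T_{\ba}h=\tfrac12(T_{\bz_1}-T_{\bz_2})h$ to obtain the form \eqref{ex3}, then exhibit an explicit preimage $h$ for the converse and invoke Theorem~\ref{con}. The one place you go beyond the paper is the final paragraph: the paper's chosen $h$ is (up to its sign slip) simply $-2g$, so boundedness of $g$ gives boundedness of $h$ immediately, whereas with your (correctly signed) $h=2z_1f_1-2z_2f_2-2z_1z_2f_3$ this direct scalar relation fails, and your coordinate-restriction argument showing $z_1f_1$, $z_2f_2$, $z_1z_2f_3\in H^\infty$ separately is exactly what is needed to recover $h\in H^\infty(\DD^2)$.
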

\begin{proof}
Suppose $f\in\cM(\ba)$. Then $f=T_{\ba}h={1\over 2}(T_{\bz_1}h-T_{\bz_2}h)$, for some $h\in H^2(\DD^2)$. Let
$$h(z)=h_0+z_1h_1(z_1)+z_2h_2(z_2)+z_1z_2h_3(z),$$
where $h_0$ is constant, $h_1, h_2$ are in $H^2(\DD)$ and $h_3$ is in $H^2(\DD^2)$.
By formula \eqref{bs}, we have $$f(z)={1\over 2}(T_{\bz_1}h-T_{\bz_2}h)(z)={1\over 2}(h_2(z_2)-h_1(z_1))+{1\over 2}(z_2-z_1)h_3(z),$$
which is in the form \eqref{ex3}.

Conversely, if $f$ has the form \eqref{ex3}, then we can pick $h$ as
$$h(z)=-2z_1f_1(z_1)+2z_2f_2(z_2)-2z_1z_2f_3(z),$$ and $f=T_{\ba}h$.
By Theorem \ref{con}, we see that functions $f$ having the form \eqref{ex} with $||g||_\infty<\infty$ is a multipliers of $\cH(\phi)$ as well.
\end{proof}

One can get the corresponding bounded composition operators using Corollary \ref{cor}. We summarize the examples given above as follows.
\begin{thm}
Let $B=(\phi, \psi)$ be an analytic map from $\DD^2$ to $\DD^2$, and let $f$ be a bounded analytic function on $\DD^2$. Then the composition operator $C_B$ is bounded on $H^2(\DD^2)$ in each of the following cases:
\begin{enumerate}
\item $\phi(z)=\frac{1+z_1}{2}$, $\psi(z)=cf$ for some constant $c\leq \frac{1}{||f||_\infty}$ , where $f$ has the form \eqref{ex} and the function \eqref{ex'} is bounded.
\item $\phi(z)=\frac{1+z_1z_2}{2}$,$\psi(z)=cf$ for some constant $c\leq \frac{1}{||f||_\infty}$, where $f$ has the form \eqref{ex2}, and the function \eqref{ex2'} is bounded.
\item $\phi(z)=\frac{z_1+z_2}{2}$, $\psi(z)=cf$ for some constant $c\leq \frac{1}{||f||_\infty}$, where $f$ has the form \eqref{ex3}, and the function \eqref{ex3'} is bounded.
\end{enumerate}
\end{thm}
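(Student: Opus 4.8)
The plan is to reduce all three cases to Corollary \ref{cor}: in each case I will manufacture, from the stated data, a contractive multiplier of $\mathcal{H}(\phi)$ that is bounded by $1$ on $\DD^2$, and then quote Corollary \ref{cor}. First I would note that in every case the displayed $a$ really is a Pythagorean mate of $\phi$ and that $||\phi||_\infty\le 1$ on $\DD^2$. The former is the one-line computation already used in the three preceding theorems: on $\TT^2$ one has $|1\pm z_1|^2=2(1+|z_1|^2)$, $|1\pm z_1 z_2|^2=2(1+|z_1 z_2|^2)$ and $|z_1\pm z_2|^2=|z_1|^2+|z_2|^2\pm 2\operatorname{Re}(\bar z_1 z_2)$, so $|\phi|^2+|a|^2=1$ a.e. in cases (1), (2), (3) respectively; the latter is the triangle inequality.

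With the Pythagorean mate available, the analytic content is supplied by the matching example theorem. In case (1), since $f$ has the form \eqref{ex} and the function \eqref{ex'} is bounded, Theorem \ref{ex1} gives that $f$ is a multiplier of $\mathcal{H}(\phi)$; cases (2) and (3) are identical, using Theorem \ref{ex2} and the corresponding (unlabeled) theorem for $\phi=(z_1+z_2)/2$. Since $f$ is a multiplier, $M_f$ is an everywhere-defined map of $\mathcal{H}(\phi)$ to itself, and is closed (pointwise convergence on a reproducing kernel Hilbert space), hence bounded by the closed graph theorem; so the multiplier norm $||M_f||$ is finite. I then rescale: putting $c_0=1/||M_f||$, for every $c\in(0,c_0]$ the operator $M_{cf}=c\,M_f$ has norm at most $1$, so $\psi=cf$ is a contractive multiplier of $\mathcal{H}(\phi)$.

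It remains to place this threshold inside the asserted range and to verify the two boundedness hypotheses of Corollary \ref{cor}. Applying \eqref{norm} to $f$ gives $||M_f||\ge ||f||_\infty$, so $c_0=1/||M_f||\le 1/||f||_\infty$; hence a constant $c\le 1/||f||_\infty$ making $\psi=cf$ a contractive multiplier exists, exactly as the statement asserts (this is the same threshold extracted in the proof of Theorem \ref{com}). For any such $c$ one also has $||\psi||_\infty=c\,||f||_\infty\le 1$, which together with $||\phi||_\infty\le 1$ shows $B=(\phi,\psi)$ maps $\DD^2$ into $\DD^2$. Corollary \ref{cor} then applies directly and gives that $C_B$ is bounded on $H^2(\DD^2)$, settling all three cases at once.

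I expect the only delicate point to be the interplay of the two norms hidden in the scalar $c$. Contractivity of $M_{cf}$ is controlled by the multiplier norm $||M_f||$, whereas the constant quoted in the theorem is phrased through $||f||_\infty$; what reconciles them is precisely inequality \eqref{norm}, which forces $1/||M_f||\le 1/||f||_\infty$ and thereby guarantees an admissible $c$ within the range $[0,1/||f||_\infty]$. Everything else — the Pythagorean-mate check, the reduction to the three example theorems, and the verification that $B$ maps into $\DD^2$ — is routine bookkeeping, so I would keep it brief and let Corollary \ref{cor} do the final work.
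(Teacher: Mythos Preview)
Your proposal is correct and follows the paper's own route exactly: the paper gives no separate proof here, merely remarking that the result follows from Corollary \ref{cor} together with the three example theorems, and you have spelled out precisely those steps (including the existential reading of the constant $c$, matching Theorem \ref{com}). One harmless slip: the identity you wrote for $|1\pm z_1|^2$ is off --- what you need is $|1+z_1|^2+|1-z_1|^2=2(1+|z_1|^2)$ on $\TT$ --- but since the Pythagorean-mate relations are already recorded in the three preceding theorems, this does not affect the argument.
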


\bibliography{references}
\end{document}